\documentclass{article}

\usepackage{zibtitlepage}

\usepackage[utf8]{inputenc}
\usepackage{amsmath, amsfonts, amssymb, amsthm}
\usepackage{mathtools}
\usepackage{calc}
\usepackage{graphicx}
\usepackage{dsfont}
\usepackage[acronym]{glossaries}

\usepackage[table]{xcolor}
\usepackage{enumerate}

\usepackage{tikz}
\usetikzlibrary{circuits}
\usetikzlibrary{quotes, angles, positioning}
\usepackage{pgfgas}

\usepackage[a4paper, margin={25mm}]{geometry}

\usepackage{accents}
\usepackage{url}

\usepackage{natbib}
\bibliographystyle{abbrvnat}
\setcitestyle{authoryear,square,semicolon}

\newlength\myheight
\newlength\mydepth
\newcommand*\inlinegraphics[1]{%
  \settototalheight\myheight{Xygp}%
  \settodepth\mydepth{Xygp}%
  \raisebox{-\mydepth}{\includegraphics[height=\myheight]{#1}}%
}

\newcommand{\R}{\mathds{R}}

\newacronym{PDE}{PDE}{system of partial differential equations}
\newacronym{ODE}{ODE}{system of ordinary differential equations}
\newacronym{DAE}{DAE}{system of differential-algebraic equations}
\newacronym{ROOT}{ROOT}{system of nonlinear equations}

\newcommand{\RN}[1]{%
  \textup{\uppercase\expandafter{\romannumeral #1}}%
}

% theorem environment
\theoremstyle{plain}
\newtheorem{theorem}{Theorem}[section]
\newtheorem{assumption}[theorem]{Assumption}
\newtheorem{observation}[theorem]{Observation}

\newcommand{\SIunit}[1]{\mathrm{#1}}
\newcommand{\SInumber}[2]{{#1}\,\SIunit{#2}}

\input{commands}

\newcommand{\skipOver}[1]{}

\newcommand{\CV}{\mathcal{RG}}

\begin{document}

\ZTPAuthor{
\ZTPHasOrcid{Felix Hennings}{0000-0001-6742-1983}, 
\ZTPHasOrcid{Milena Petkovic}{0000-0003-1632-4846}, 
\ZTPHasOrcid{Tom Streubel}{0000-0003-4917-7977}}
\ZTPTitle{On the Numerical Treatment of Interlaced Target Values -- Modeling, Optimization and Simulation of Regulating Valves in Gas Networks}
\ZTPNumber{21-32}
\ZTPMonth{December}
\ZTPYear{2021}
\ZTPInfo{}

\title{On the Numerical Treatment of Interlaced Target Values -- Modeling, Optimization and Simulation of Regulating Valves in Gas Networks}
\author{
\ZTPHasOrcid{Felix Hennings}{0000-0001-6742-1983}\and\
\ZTPHasOrcid{Milena Petkovic}{0000-0003-1632-4846}\and\
\ZTPHasOrcid{Tom Streubel}{0000-0003-4917-7977}}
\hypersetup{pdftitle={On the Numerical Treatment of Interlaced Target Values -- Modeling, Optimization and Simulation of Regulating Valves in Gas Networks},
  pdfauthor={Felix Hennings, Milena Petkovic, Tom Streubel}}

\zibtitlepage
\maketitle

\begin{abstract}
Due to the current and foreseeable shifts in energy production, the trading and transport operations of gas will become more dynamic, volatile, and hence also less predictable.
Therefore, computer-aided support in terms of rapid simulation and control optimization will further broaden its importance for gas network dispatching.
In this paper, we aim to contribute and openly publish two new mathematical models for regulators, also referred to as control valves, which together with compressors make up the most complex and involved types of active elements in gas network infrastructures.
They provide full direct control over gas networks but are in turn controlled via target values, also known as set-point values, themselves.
Our models incorporate up to six dynamical target values to define desired transient states for the elements' local vicinity within the network.
That is, each pair of every two target values defines a bounding box for the inlet pressure, outlet pressure as well as the passing mass flow of gas.
In the proposed models, those target values are prioritized differently and are constantly in competition with each other, which can only be resolved dynamically at run-time of either a simulation or optimization process.
Besides careful derivation, we compare simulation and optimization results with predictions of the commercial simulation tool SIMONE.
\end{abstract}

\section{Introduction}
The physical and technical properties of gas transport in networks have been studied for many decades already. 
They have gotten increasing attention in the last years since countries worldwide attempt a turnaround in their national energy policies towards carbon dioxide neutral means of energy production.
Reasons for this are the potential usage of gas networks as flexible energy storage for balancing the highly volatile generation of renewable energy sources \citep{Fed2021} or the prospect of shifting to hydrogen transport in the future \citep{EurLex2020}.
In both cases, controlling the network is expected to become more dynamic and complex, either due to the increasing variability of supply and demand or the gas properties of hydrogen compared to natural gas~\citep{HopHenZitGot2020}.
To make the task of operating gas networks manageable, the decision-making is split into multiples levels.
It ranges from the complete view on the network featuring only abstract representations of the technical elements to detailed but local control decisions, for example, the degree of openness of a single pressure regulator or the rotational speed of a gas compressor~\citep{Ste1988}.
The decisions on the different levels are usually connected via the concept of \emph{set-point values} or \emph{target values}.
These represent a control requirement determined in the full network view, which then serves as an objective for the local operation of the actual machinery.

Controlling an element with set-points is a well-known concept in the community of automated control systems. The difference between the measured (actual) and the desired (target) value of an observed process variable is the basis for error-controlled regulation with negative feedback (closed-loop) in automatic control~\citep{Porter1988}. Proportional-Integral-Derivative (PID) control is the most common feedback control algorithm in engineering systems. In process industries, most of the control loops ($\approx95\%$) are of PID type~\citep{Astrom2008}.
The advanced control methods including model based control techniques (model predictive control~\citep{Camacho2007,DARBY2012328,HABER20145333} or multivariable control~\citep{Skogestad2005}), intelligent control techniques (fuzzy control~\citep{Nguyen2019} or neural network based control~\citep{Meireles2003}), and adaptive control~\citep{Astrom2014}, are often used as a high level control procedures for designing high performance controllers that can be applied to high-order and multivariable processes, typically nonlinear and subject to constraints.

When looking at work regarding the simulation or optimization of gas networks, the usage of set-points to control active elements has a long history as well.
In~\citep{MalFinBulRol1993}, the authors present the optimization problem of finding a stationary control for the British gas network.
The control of the compressors is defined as the corresponding set-point for the gas pressure after compression, also called the outlet or outgoing pressure of the element.
The same holds for \citep{RacCar2000}, which examine a transient optimization problem on a gunbarrel network, given a desired final state of the network.
In a second paper, the same authors extended the model to prepare for not one but multiple possible future scenarios in a robust way \citep{CarRac2003}.
In recent years, more set-point or target value models have been used.
In their description of a stationary gas network control optimization problem, \citep{PfeFueGeiGei2014,Schmidt2015,KocHilPfeSch2015} introduced a model for a regulator without remote access having a fixed set-point for the outgoing pressure.
However, depending on whether the actual state value exceeds the set-point, matches it, or falls below it, the regulator is closed, active, or fully open.
In the last case, it acts as a bypass.
Finally, there are also models including a multitude of different set-point or target values.
In \citep{PamBolDij2016}, a model used for gas network simulation was introduced, in which regulators and compressors are given a set-point either for the inlet pressure, the outlet pressure, the flow, or the ratio of both the in- and outlet pressure.
Alternatively, they can be chosen to be closed or in bypass mode.
The simulation model of \citep{Benner2019} features two different modes for each compressor and regulator.
These either try to keep the inlet or the outlet pressure at a given target value.

In this article, we introduce a much more involved description for the target value control of active elements.
It features up to 6 different set-point values, which are in simultaneous use and hence competing.
As a result, the element can react to changing conditions in the surrounding network without the need to switch the target value objective manually.
For example, it is able to autonomously switch between holding the inlet pressure, holding the flow, or even closing the element, in the case of a regulator.
This model captures the behavior of complex elements used in real-world networks and industry-standard simulators to precisely express the desired control given dynamically changing conditions in the corresponding part of the network.

The description can, in general, be applied to regulators and compressors.
However, we will focus on regulators here, i.e., the elements reducing the pressure in flow direction by reducing the element's throughput.
In literature, there exist several models for regulators. 
For example, \citep{Huck2018Perturbation} models an idealized regulator by demanding that the pressure difference between its outlet and inlet pressure must be positive and inside a given interval.
In \citep{Benner2019}, the regulator is described as a resistor with variable diameter.
Hence, it is given a degree of openness ranging between 0 and 1.
This allows them to include the edge case of a fully closed regulator, in which the two pressures are decoupled and the flow is zero.
The model presented in  \citep{PfeFueGeiGei2014,Schmidt2015,KocHilPfeSch2015} distinguishes two types of regulators: Those with and without remote access.
The ones without remote access are controlled as already described above using the outlet pressure target value.
When having remote access, the regulators can either be closed or again keep their pressure difference inside a given interval.
Another and precursor to one of our models presented here is presented in \citep{HenAndHopTur2021}.
Within there, the authors consider three different modes for a regulator: The regulator is either closed, active, or open.
A closed regulator is defined as for the other models, an open one does assume the inlet and outlet pressures to be equal, while an active regulator allows for arbitrary pressure reduction.
In addition, the regulator features a check valve or flap trap, which prevents flow against the regulator's orientation.

We will present two models for regulators using the target value control: One model suitable for dynamic simulation using a given set of target values over time and one for discrete optimization problems, in which target values should be determined that induce the desired element behavior while changing as little as possible.

The rest of the paper is structured as follows:
In Section~\ref{sec:pipeModel}, we will first introduce the fundamental gas flow dynamic in pipes and our representation of a gas network as a directed graph.
Based on this, we describe our new regulator model based on the 6 different target values in Section~\ref{sec:target_value_model}.
In the following two sections, we first introduce a corresponding model suitable for the transient simulation of regulators given the target value settings for a future time horizon. Afterwards, we describe a discrete optimization model that can determine the control of a regulator in terms of the target values in a transient context.
In Section~\ref{sec:numeric_evaluation}, we present two experiments on a minimal gas network and compare the performance of the two models against each other as well as a commercial simulation software.
We conclude with the outlook in Section~\ref{sec:outlook}.

\section{Models for Gas Flow in Pipe Networks}
\label{sec:pipeModel}
Mathematical models of various so-called active elements, such as regulators, compressors, or valves, manipulate the gas flow through themselves by blocking, resisting, or enhancing it.
However, the response to their actions is determined in the surrounding pipes that provide the volume for the gas to be.
Hence, we need to model the behavior of the gas in pipes in terms of pressures \(p\), mass-flows \(q\), and their interaction.
We will use a version of the isothermal Euler equations, which we will introduce and describe in the following subsection.

\subsection{Isothermal Euler Equations}
A simplified or \emph{friction dominated} version of the one-dimensional set of isothermal Euler equations for the description of the behavior of fluids within perfectly cylindrical pipes of length \(L \in \R_{>0}\) in \(\SIunit{m}\) and diameter \(D \in \R_{>0}\) in \(\SIunit{m}\) is given by
\begin{subequations}\label{eqn:pipe_pde}
% \noeqref{eqn:continuity, eqn:momentum} better compatibility with showonlyrefs from mathtools
\begin{align}
    \dot p &= -\left[\frac{z^2(p)}{z(p) - p\cdot \partial_p z(p)}\right]\kappa\cdot \partial_x q, \label{eqn:continuity} \\ %\left[\frac{z^2(p)}{z(p) - p\cdot \partial_p z(p)}\right]
    %\dot \rho &= -\frac{R_sT}A\partial_x q, \label{eqn:continuity} \\ %\left[\frac{z^2(p)}{z(p) - p\cdot \partial_p z(p)}\right]
    % 0 &= \partial_x p + \frac{\lambda}{2 D A^2}\frac{q|q|}\rho + \delta\!h g \rho, \label{eqn:momentum} %\\ % - \dot q
    \chi\cdot \dot q &= A\partial_x p + \frac{\lambda}{2 D}\kappa z(p) \frac{q|q|}{p} + \delta\!h \frac{g}{\kappa} \frac{p}{z(p)}, \label{eqn:momentum} % - \dot q
    %p &= R_sTz(p)\rho.
\end{align}
\end{subequations}
with \(\chi \in\{0, 1\}\) being a time constant model switch.
These particular versions are referred to as ISO2 if \(\chi = 1\), or as ISO3 if \(\chi = 0\) according to \citep{DomHilLanMeh2021} and can be derived by a time scaling approach presented by \citep{Brouwer2011}.
According to empirical observations in \citep{Hen2021}, we will use \(\chi = 0\) exclusively here.

The intrinsic quantities of the equations are \(p \equiv p(x, t)\) for the pressure, \(q \equiv q(x, t)\) for the mass flow, \(\rho(x, t)\) for the density, and \(v(x, t)\) for the velocity and are parameterized along the longitudinal axis by \(x \in \Omega = [0, L]\) as well as being averaged across the cross-sectional area \(A = \frac 14 \pi D^2\) as enclosed by the pipe. 
Furthermore and throughout, let \( \dot p(x, t) \equiv \partial_t p(x, t) \) and likewise for \(\dot q(x, t)\) be the time derivatives. 
Also, let \(R_s\) be the specific gas constant, \(T\) be the time-constant temperature, \(\kappa = R_sT/A\) as shorthand, \(g \approx \SInumber{9.81}{m/s^2}\) the gravity constant, and \(\delta\! h = (h_r - h_\ell)/L\) as secant slope defined by the heights \(h_\ell\) on the left and \(h_r\) right end of the pipe. Furthermore, let \(z(p) = z_0 + z_1 p + \dots + z_np^n\) be a polynomial model or truncated Virial expansion for the gas factor for real gases \citep{Virial_expansion}, and \(\lambda\) be the friction coefficient of the \emph{Darcy-Weisbach equation} \citep{Bro2003}. The latter is an empirical model of pressure loss due to friction with the pipe wall. 

For the determination of coefficients of the polynomial real gas factor model \(z(p)\), we will make use of the linear AGA formula \citep{SIMONE, DomHilLanMeh2021} of the American Gas Association, being accurate up to pressures \(p \le \SInumber{70}{bar}\), the quadratic formula of Papay \citep{Pap1968, Sal2002} for pressures \(p \le \SInumber{150}{bar}\), or the constant model \(z_0 = c^2/(R_s T)\) as suggested by \citep{DomHilLanMeh2021}, where \(c\) is the speed of sound within the given gas mixture.

To approximate the friction factor \(\lambda\), we will use the \emph{Nikuradse} formula \citep{Nik1950}, which is given by
\begin{align}
    \lambda = \left[ -2\log_{10}\big( r/(3.71D) \big) \right]^{-2}.
\end{align}
It is an explicit simplification of the otherwise implicit \emph{Colebrook-White} \citep{ColWhi1937, Bro2003} formula. Here \(r\) is the roughness of the pipe wall surface and is usually provided in millimeters \(\SIunit{mm}\).

Two important identities that have already been utilized for the ultimate derivation of \eqref{eqn:pipe_pde} are 
\begin{align}
    p &= R_s\cdot T\cdot \rho\cdot z(p) & \text{and}& & q &= A\cdot \rho\cdot v, \label{eqn:stateEqn}
\end{align}
where the first one is referred to as the \emph{state equation for real gases} \citep{Men2005}.
We call \eqref{eqn:pipe_pde} the \emph{pipe equations}.
More in-depth details on Euler equations in the context of fluid transport can be found, e.g., in \citep{Osi1996, DomHilLanMeh2021, Benner2019}. 

Alternatively, in \citep{Hen2018}, a linearization of the pipe equation \eqref{eqn:pipe_pde} has been proposed, which has the benefit of avoiding divisions by small pressure values. 
There it was observed that the equations \eqref{eqn:stateEqn} yield the identity \(|v| = z(p)\cdot \kappa\cdot |q|/p\), which can be fixed on predetermined or forecast velocities \(|\bar v|\), such that
\begin{align}
    0 &= A\partial_x p + \frac{\lambda}{2 D} \cdot q|\bar v| + \delta\!h \frac{g}{\kappa} \frac{p}{z(p)}. \tag{\ref{eqn:pipe_pde}b''} \label{eqn:new_momentum_const_v}
\end{align}
Provided we use a constant model for the real gas factor, then equations \eqref{eqn:continuity} and \eqref{eqn:new_momentum_const_v} together yield a linearized model of the pipe equations.

\subsection{Gas Networks as Graph Structures From a Macroscopic Perspective}
We have just established the system of pipe equations \eqref{eqn:pipe_pde}, but we are interested in gas networks as a whole and in solving online tasks. 
In this context, online means to utilize simulation or optimization to formulate control and operation recommendations as a guiding as well as continuous process live at operation time.
Thus, the overall problem results in potentially large systems that must be solved repeatedly in a very limited time frame.
For this reason, we have to take a macroscopic viewpoint regarding modeling the network and its building blocks. Hence, we consider gas networks as directed graphs \(\mathcal G = (\mathcal N, \mathcal A)\) over some time horizon \([t_0, T] \subseteq \R\) of interest.
The set of nodes \(\mathcal N\) is finite \(|\mathcal N| < \infty\), and the set of arcs \(\mathcal A \subseteq \mathcal N\times \mathcal N\) is a collection of ordered tuples of nodes.

Nodes represent the intersection points of all the arcs. Each node \(n \in \mathcal N\) serves as a boundary to all of its adjacent arcs and hence intertwines and connects them by the so-called \emph{flow balance equation}, which is a \emph{Kirchhoff typed law}
\begin{align}
    q^{(n)}(t) + \sum_{a \in \mathcal A_r[n]} 
    q_a(L, t) = \sum_{a \in \mathcal A_\ell[n]} q_a(0, t).
\end{align}
Here \(\mathcal A_\ell[n]\) is the set of all arcs (i.e., ordered tuples) \(a \in \mathcal A\) whose first node is \(n\) and \(\mathcal A_r[n]\) likewise is the set of all arcs \(a \in \mathcal A\) whose second node is \(n\), i.e.
\begin{align*} 
    a \in \mathcal A_\ell[n] &\iff \exists m \in \mathcal N: a = (n, m) \in \mathcal A, &
    a \in \mathcal A_r[n] &\iff \exists m \in \mathcal N: a = (m, n) \in \mathcal A.
\end{align*}
Furthermore, \(q^{(n)}\) is the gas inflow or outflow to the network, and therefore \(q^{(n)} = 0\) for most nodes.
However, if \(n\) is an entry or exit, a time-dependent piecewise constant or piecewise linear flow-profile \(q^{(n)}:\, [t_0, T]\subseteq \R \to \R\) is assumed to be provided. 
A second kind of Kirchhoff law hosted at nodes mediates boundary pressures from adjacent arcs in the following sense
\begin{align}
    \forall a \in \mathcal A_\ell[n]:\, p_a(0, t) &= p^{(n)}(t), & \forall a \in \mathcal A_r[n]:\, p_a(L, t) &= p^{(n)}(t),
\end{align}
where \(p^{(n)}(t)\) is the so-called \emph{node pressure}. 
Hence, this Kirchhoff law forces adjacent boundary pressures to coalesce at all times and associates the mutual value to be the pressure of the node. 

Arcs represent mostly \emph{pipes} but may also stand for other network elements such as active elements, like \emph{valves}, \emph{regulators}, and \emph{compressors}.
For each arc \(a = (n_\ell, n_r) \in \mathcal A\), the flow \(q(x, t) \equiv q_a(x, t)\) and pressure \(p(x, t) \equiv p_a(x, t)\) on both its ends are of particular interest and, therefore, will be abbreviated by \(q_\ell(t) \equiv q(0, t)\), \(q_r(t) \equiv q(L, t)\), \(p_\ell(t) \equiv p(0, t)\), and \(p_r(t) \equiv p(L, t)\). 
Correspondingly, the same holds true for the time derivatives, such that \(\dot p_\ell(t) = \dot p(0, t)\), \(\dot p_r(t) = \dot p(L, t)\), \(\dot q_\ell(t) = \dot q(0, t)\), and \(\dot q_r(t) = \dot q(L, t)\).

\subsection{On Spatial Discretizations of the Pipe Equations}
The literature offers quite a variety of potential spatial discretizations for the Equations~\eqref{eqn:pipe_pde}. 
We will pick two schemes for our presentation and numerical experiments. 
The first discretization in our listing originates as discretization for pipes in water networks \citep{JansenHuckTischendorf} and also has been studied in-depth in \citep{Huck2018Perturbation} for gas networks in advance. We will refer to it as \emph{left-right}-discretization
\begin{subequations}\label{eqn:pipe_ode_LR}
\begin{align}
    \dot p_r &= -\left[\frac{z^2(p_r)}{z(p_r) - p_r\cdot \partial_p z(p_r)}\right]\kappa \frac{q_r - q_\ell}L, \\
    0 &= A\frac{p_r - p_\ell}L + \frac{\lambda}{2D}\kappa z(p_\ell) \frac{q_\ell|q_\ell|}{p_\ell} + \delta\!h \frac{g}{\kappa} \frac{p_\ell}{z(p_\ell)}.
\end{align}
\end{subequations}

The second scheme, or \emph{implicit-box}-scheme, has been analyzed within \citep{KolLanBal2010}. This particular scheme actually combines a spatial and a time discretization altogether. Dissecting the spatial discretization relates to the trapezoidal as observed in \citep{TomSThesis} and reads
\begin{subequations}\label{eqn:pipe_ode_trap}
\begin{align}
    \dot p_r + \dot p_\ell &= -\left[\frac{z^2(p_\ell)}{z(p_\ell) - p_\ell\cdot \partial_p z(p_\ell)} + \frac{z^2(p_r)}{z(p_r) - p_r\cdot \partial_p z(p_r)}\right]\kappa\frac{q_r - q_\ell}L, \\
    0 &= A\frac{p_r - p_\ell}L + \frac{\lambda}{2 D}\kappa \left[z(p_\ell)\frac{q_\ell|q_\ell|}{p_\ell} + z(p_r)\frac{q_r|q_r|}{p_r}\right] + \delta\!h \frac{g}{\kappa} \left[\frac{p_\ell}{z(p_\ell)} + \frac{p_r}{z(p_r)}\right].
\end{align}
\end{subequations}
We will refer to \eqref{eqn:pipe_ode_trap} as the \emph{trapezoidal}-discretized pipe equation. 

\section{Towards the Regulator -- A Target Value Model}
\label{sec:target_value_model}
Regulators can down-regulate or halt gas flow through themselves, gain granular control of their surrounding neighborhood, and ultimately down the network.
They can also mediate and exchange gas between areas operated at different pressure levels. 
Their operation is controlled by target values.
In this section, we will provide a detailed explanation of the underlying target value logic, the interactions between pipes and regulators but also establish notions as well as terminology.

\subsection{On the Relations of Flows and Pressures}
\label{sec:relation_flow_pressure}
Both regulators and compressors do manipulate the gas flow through themselves.
Due to the gas dynamic modeled within neighboring pipes, the pressures will react and change as a consequence. 
Thus, from their direct influence on gas flow, active elements also have an indirect but immediate influence on pressures. 
Elaborating on this thought, either discretization as introduced so far does approximate pressures and flows linearly along a pipe, i.e., 
\begin{align*}
    q(x, t) &\approx q_\ell + x\frac{q_r - q_\ell}L\qquad \rightsquigarrow\qquad \partial_x q \approx \frac{q_r - q_\ell}L, \\
    p(x, t) &\approx p_\ell + x\frac{p_r - p_\ell}L\qquad \rightsquigarrow\qquad \partial_x p \approx \frac{p_r - p_\ell}L.
\end{align*}
Thus, we find a common precursor as space continuous system of differential equations with algebraic constraints derived from \eqref{eqn:pipe_pde} as
\begin{subequations}\label{eqn:pipe_space_continuous_ode}
\begin{align}
    \dot p(x, t) &= -B_z(p(x, t))\kappa\frac{q_r - q_\ell}L, \label{eqn:continuity_space_continuous_ode} \\ %\left[\frac{z^2(p)}{z(p) - p\cdot \partial_p z(p)}\right]
    0 &= A\frac{p_r - p_\ell}L + \frac{\lambda}{2 D}\kappa z(p(x, t)) \frac{q(x, t)|q(x, t)|}{p(x, t)} + \delta\!h \frac{g}{\kappa} \frac{p(x, t)}{z(p(x, t))}, \label{eqn:momentum_space_continuous_ode} % - \dot q
\end{align}
\end{subequations}
where we refer to \(B_z(p(x, t)) \equiv \left[z^2(p(x, t))/[z(p(x, t)) - p(x, t)\cdot \partial_p z(p(x, t))]\right]\) as \emph{bracket term}. % \(B_z(p(x, t)) \equiv \left[\frac{z^2(p(x, t))}{z(p(x, t)) - p(x, t)\cdot \partial_p z(p(x, t))}\right]\)
Now provided that \(B_{z}(p) > 0\), we can conclude for \eqref{eqn:continuity_space_continuous_ode} that net-positive inflows imply positive time derivatives of pressures,
i.e.,
\begin{subequations}\label{eqn:sensitivities}
    \begin{align}
        q_\ell > q_r \quad \implies \quad \dot p > 0, \\
        \intertext{and likewise the other way around}
        q_\ell < q_r \quad \implies \quad \dot p < 0,
    \end{align}
\end{subequations}
for net-positive outflows.
Furthermore, \(B_z(p) > 0\) is indeed true if and only if \(z(p) > p\partial_p z(p)\), which is generally true for any constant and linear model of the real gas factor and also true for the quadratic formula of Papay on its recommended operational range.
Hence, we could take control over local pressures in a predictable manner by manipulating the exchanged flow between two serial pipes. This is the fundamental mechanism on which we intend to base our regulator modelings upon.

Thus in the sense of this paper, we consider a regulator (sometimes also referred to as control valve) \(\CV\) to be a sub-network in itself, involving \(1\) entry pipe \(\mathcal P_\ell^\CV\) followed by a so-called \emph{atomic regulator} \(\CV_{\mathrm{atomic}}\) followed by \(1\) more exit pipe \(\mathcal P_r^\CV\). Figure \ref{fig:typical_cv_configurations} depicts the described sub-network configuration.

\begin{figure}[h!]
    \centering
    \begin{tikzpicture}[circuit, circuit symbol unit=6pt]
        \coordinate (L) at (0, -2);
        \coordinate (L_mid_bot) at (3.0, -2);
        \coordinate (Mid) at (6.5, -2.2);
        \coordinate (R_mid_bot) at (10.0, -2);
        \coordinate (R) at (13, -2);
        
        \coordinate (L_L) at (0, 0);
        \coordinate (L_mid_L) at (1.5, 0.0);
        \coordinate (L_mid_R) at (3.5, 0.0);
        \coordinate (L_R) at (5, 0);
        \coordinate (LR_between) at (6.5, -0.2);
        \coordinate (R_L) at (8, 0);
        \coordinate (R_mid_L) at (9.5, 0.0);
        \coordinate (R_mid_R) at (11.5, 0.0);
        \coordinate (R_R) at (13, 0);
        
        \draw[dashed] (L) to (L_L);
        \draw[dashed] (R) to (R_R);
        
        \draw (L) to [cv={label=$\CV$}] (R);
        
        \node[fill=white, draw, circle, label=below:{$p_\ell$}] at (L) {$n_\ell$};
        \node[fill=white, draw, circle, label=below:{$p_r$}] at (R) {$n_r$};
        
        \node[label = below:{$q$}] at (Mid) {};
        
        \draw (L_L) to [pi={label=$\mathcal P_\ell^\CV$}] (L_R);
        \draw (L_R) to [cv={label=$\CV_{\mathrm{atomic}}$}] (R_L);
        \draw (R_L) to [pi={label=$\mathcal P_r^\CV$}] (R_R);
        
        \node[fill=white, draw, circle, label=above:{$p_\ell$}] at (L_L) {$n_\ell$};
        \node[fill=white, draw, circle, label=above:{$p^{(n_a)}$}] at (L_R) {$n_a$};
        \node[fill=white, draw, circle, label=above:{$p^{(n_b)}$}] at (R_L) {$n_b$};
        \node[fill=white, draw, circle, label=above:{$p_r$}] at (R_R) {$n_r$};
        
        \node[label = below:{$q_\ell$}] at (L_mid_bot) {};
        \node[label = below:{$q_\ell$}] at (L_mid_L) {};
        \node[label = below:{$q$}] at (L_mid_R) {};
        \node[label = below:{$q$}] at (LR_between) {};
        \node[label = below:{$q$}] at (R_mid_L) {};
        \node[label = below:{$q_r$}] at (R_mid_R) {};
        \node[label = below:{$q_r$}] at (R_mid_bot) {};
    \end{tikzpicture}
    \caption{Depiction of the hidden sub-network structure (above in the picture) behind a regulator or sometimes control valve \(\CV\) (below in the picture), including an atomic regulator \(\CV_{\mathrm{atomic}}\) as well as two internal pipes \(\mathcal P^\CV_\ell\) and \(\mathcal P^\CV_r\).}
    \label{fig:typical_cv_configurations}
\end{figure}
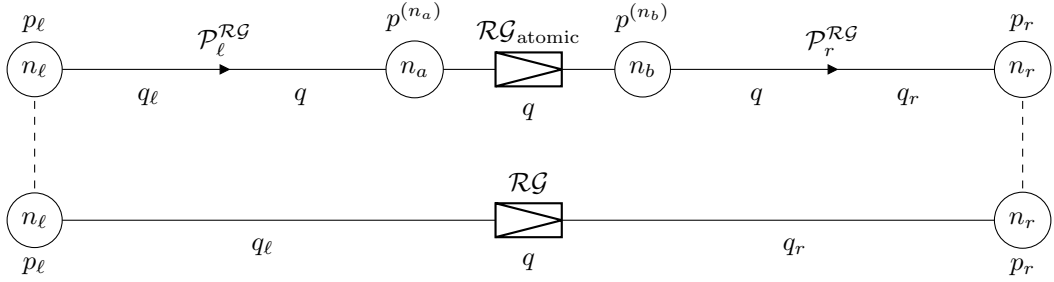

Now the atomic regulator centering between both hidden pipes is allowed to change the flow $q$ by increasing or decreasing its resistance. %any non negative flow \(q \ge 0\) as long as \(p_\ell \ge p_r\) 
In other words, it exploits our observations in \eqref{eqn:sensitivities} which roughly translate into the following expected behavioral tendencies
\begin{align*}
  q \nearrow \qquad&\rightsquigarrow\qquad p_\ell \searrow p_r \nearrow\ , \\
  q \searrow \qquad&\rightsquigarrow\qquad p_\ell \nearrow p_r \searrow.
\end{align*}

\subsection{Regulators Controlling the Gas Flow} \label{sec:element_control}
\begin{figure}
  \centering
  \includegraphics[width=0.6\textwidth]{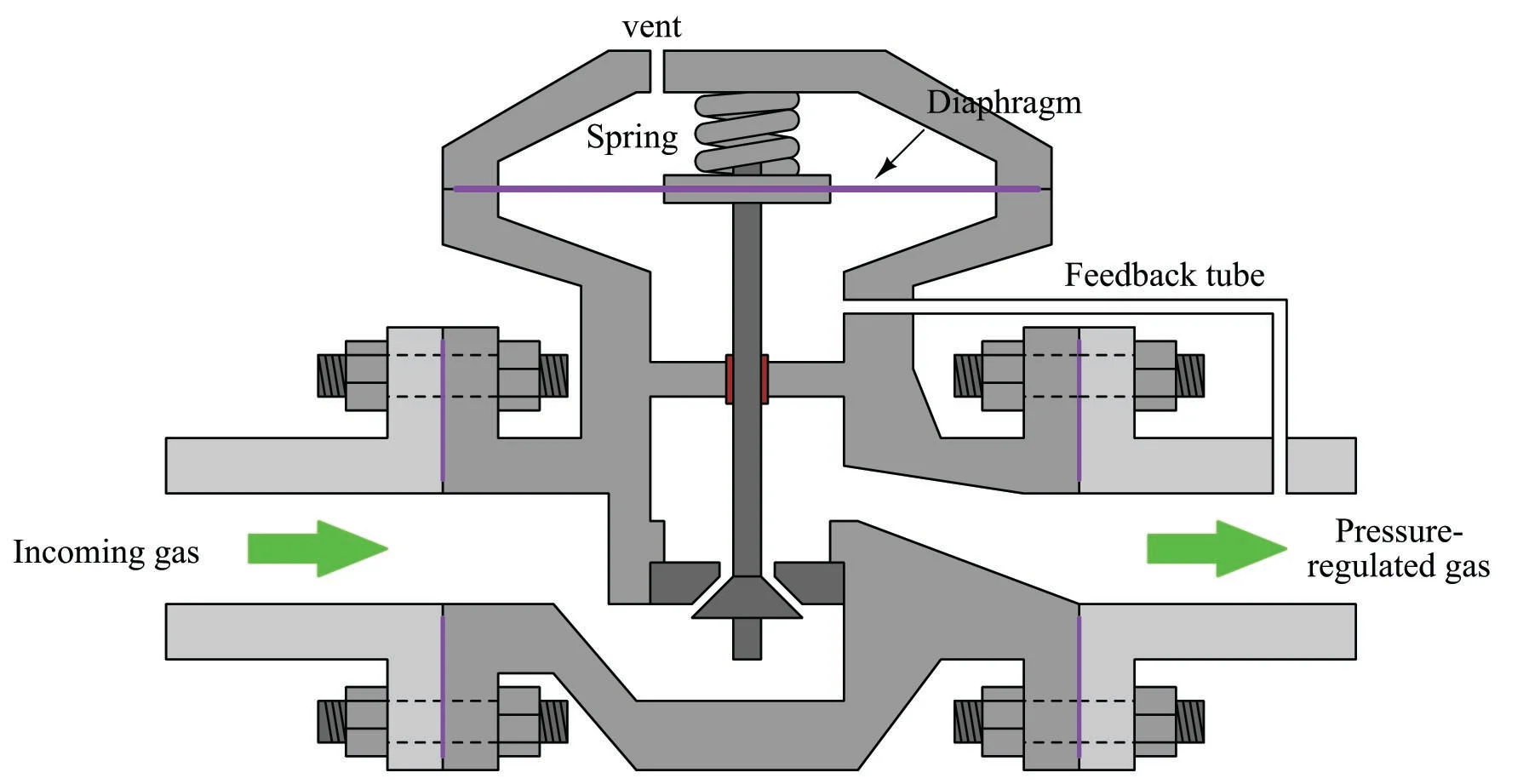}
  \caption[Visualization of a regulator controlling the right pressure]{Visualization of a regulator controlling the right pressure; picture created by \citep{Kup2019}, published under CC BY 4.0\protect\footnotemark}
  \label{fig:schematicRegulator}
\end{figure}
\footnotetext{\url{https://creativecommons.org/licenses/by/4.0/}}
The task of regulators in gas networks is to reduce the pressure along their orientation by decreasing the flow through the element.
This flow decrease is accomplished by reducing the regulator's opening degree $o\in[0,1]$, which can be interpreted as changing the diameter of the regulator, assuming its cross-sectional area is cylindrical, see~\citep{FugGeiGolMor2015}.
If a regulator is fully opened at $o=1$, it does not reduce the flow and $p_\ell=p_r$ holds.
At $o=0$, a regulator is fully closed, which disconnects the network.
Hence, $q=0$ and the pressures are decoupled.
Values in between induce artificial resistance to reduce the flow and therefore create a pressure decreases in flow direction as derived in Section~\ref{sec:relation_flow_pressure}, i.e., $p_\ell \geq p_r$.
A schematic visualization of the principle operation of a regulator is depicted in Figure~\ref{fig:schematicRegulator}. It originates from \citep{Kup2019}, where more insights on various technical details and further information on other gas network elements are provided, too.

\subsection{Target Value Control}\label{sec:description_target_value_control}
For gas network operators, i.e., dispatchers, adjusting the opening degree of single elements by hand is far too involved and would require continuous observation.
Instead, regulators have automated systems fed by desired ranges for the \(3\) local intrinsic quantities \(p_\ell, p_r, q_m\). The \(6\) defining bounds for these ranges \(\MINPIN\), \(\MAXPIN\), \(\MINPOUT\), \(\MAXPOUT\), \(\MINQ\), \(\MAXQ\) are called \emph{target values} (and sometimes also referred to as set-point values).
An overview of the \(6\) types of target values and their consequential influence is listed in Table~\ref{tab:tvOverview}.

\begin{table}[ht]
    \centering
    \begin{tabular}{llcrc}
    Target Value & Symbol & Priority \funcTvPrio & imposed bounds & change if violated 
    \medskip\\
    minimal left pressure & \MINPIN  & 4 & $\MINPIN  \leq \mathmakebox[1cm][l]{p_\ell}$ & closing \\
    maximal right pressure & \MAXPOUT & 4 & $\MAXPOUT \geq \mathmakebox[1cm][l]{p_r}$    & closing \\
    maximal left pressure & \MAXPIN  & 3 & $\MAXPIN  \geq \mathmakebox[1cm][l]{p_\ell}$ & opening \\
    minimal right pressure & \MINPOUT & 3 & $\MINPOUT \leq \mathmakebox[1cm][l]{p_r}$    & opening \\
    maximal flow              & \MAXQ    & 2 & $\MAXQ    \geq \mathmakebox[1cm][l]{q}$      & closing \\
    minimal flow              & \MINQ    & 1 & $\MINQ    \leq \mathmakebox[1cm][l]{q}$      & opening
    \end{tabular}
    \caption{Overview of all types of target values sorted by priority from high at the top to low at the bottom. We also listed the implied bound on the corresponding quantity of the regulator as well as the direction of change of the regulator's opening degree, which is triggered if the target value is violated.}
    \label{tab:tvOverview}
\end{table}

If a target value imposed bound is violated by the state values (we then also refer to the target value as being violated), the element adjusts according to Table~\ref{tab:tvOverview} to prevent or reduce the violation as much as possible.
To resolve conflicts of multiple violated target values demanding opposing changes, the target value types are prioritized by $\funcTvPrio(\tv)$ for the target values $\tv$ from top to bottom.
In general, pressure target values and those reducing the opening degree and thereby the flow have higher priorities.
Some target values may share their priority value, but only if they influence the element's control in the same way.

Note that it is possible to force the regulator to be fully opened or fully closed by using certain target value combinations, for example, by setting
\begin{center}
\begin{align*}
    \MINPIN = \MINPOUT = 0, \quad \MAXPIN = \MAXPOUT = \MINQ = \MAXQ = \infty & \quad \text{for the \textbf{open} mode or} \\
    \MINPIN = \MAXPIN = \infty, \quad \MINPOUT = \MAXPOUT = \MINQ = \MAXQ = 0 & \quad \text{for the \textbf{closed} mode}.
\end{align*}
\end{center}

\paragraph{The Regulator’s Check Valve}
A regulator contains a built-in element that prevents flow against its orientation, i.e., $q\geq 0$.
This element is called a check valve or flap trap and automatically closes the regulator in case the right pressure rises above the left one, i.e. $p_\ell < p_r$.
This action has a higher priority than all the target values and therefore happens independently of these.
If the regulator is not closed by the check valve, $p_\ell \geq p_r$ holds.

\paragraph{Infinite Minimal Flow Target Value}
For most elements, the lowest priority target value \MINQ is fixed to $\infty$, which effectively is the same as choosing \(\MINQ = \MAXQ \le \infty\), in which case we may also denote \(\SETQ \equiv \MINQ = \MAXQ\).
Regulators were actually \(\infty \neq \MINQ < \MAXQ\) are called ``flow band regulators'' or simply ``band regulators''.
They keep their current degree of openness and do not react to changes in the flow as long $q$ stays within \(\MINQ \le q \le \MAXQ\) and provided that all other target values are satisfied.
This behavior would increase the overall complexity of the models used for the simulation and optimization of target values tremendously and is out of this paper's scope.

\paragraph{Target Value Existence}
Not all target values are always in use.
This has to be taken into account by the models described below, for example by prohibiting their violation by setting maximal target values to $\infty$ and minimal target values to \(0\).

However, we assume that the fixed target value \MINQ always exists and in addition at least one of the closing target values.
Otherwise, the element's control could only be changed in one direction.

\subsection{Target Values for Compressors}\label{sec:compressors}
In addition to regulators, also active compressors are controlled by target values.
While compressors \emph{increase} the pressure in flow direction, they influence their flow throughput to achieve these local pressure changes similar to regulators.
For the two most common types of compressors, the flow changes are realized by controlling the compressor's rotational speed, see~\citep{FugGeiGolMor2015, SchAssmBurHum2017} for a more detailed explanation.
Due to the similarities, the target value description for compressors is very close to the one of regulators.
However, it is out of this paper's scope.

\section{Modeling Regulators for Dynamic Simulations}\label{sec:simulationModel}

For the derivation of regulator models for simulations, we will follow \citep{TomSThesis}.
Target values are no bounds in a classical sense. 
Instead, they influence and guide the behavior of a regulator over time.
Among ordinary differential equations, we find
\begin{align}
    \dot x(t) = \alpha[c(t) - x(t)],\quad \text{where}\quad x(t_0) = x_0 \label{eqn:prototypeCV_ODE}
\end{align}
as a prototype model. We can utilize \eqref{eqn:prototypeCV_ODE} to model the different responses of regulators to answer various kinds of imminent or present target value violations. The solution to Equation~\eqref{eqn:prototypeCV_ODE} is
\begin{align*}
    x(t) = \exp(t_0 - t)^\alpha\cdot x_0 + \alpha\int_{t_0}^t \exp(s - t)^\alpha\cdot c(s)\, \mathrm ds
\end{align*}
and always tends towards \(c(t)\). In some sense, we may describe the behavior of \(x\) as chasing \(c(t)\), or we may see \(x(t)\) as a leaky or lazy smoothing of \(c(t)\). In case \(c(t) = c_0\) is constant, then \(x(t)\) would also converge. With the parameter \(\alpha > 0\), we can adjust how loose or tight \(x(t)\) will follow \(c(t)\). In the limit case \(\dot x/\alpha \xrightarrow{\alpha \to \infty} 0\), we find \(x(t) = c(t)\). 

We adapt this model to realize a regulator following its target values. We may substitute \(x\) for the flow \(q\) and replace \(c(t)\) with the target value for flows and find
\begin{align}
    \dot q = \alpha[\SETQ - q], \label{eqn:cv_derivation_stage_i}
\end{align}
given \(\SETQ = \MINQ = \MAXQ\).
On top of \eqref{eqn:cv_derivation_stage_i}, we may add the check valve \(\dot q = \alpha\max(0, -q)\) and non-compressing behavior \(\dot q = \alpha\min(p_\ell - p_r, 0)\) and find
\begin{align}
     \dot q = \alpha\max(-q, \min(p_\ell - p_r, \SETQ - q)), \label{eqn:cv_derivation_stage_ii}
\end{align}
which is a nesting of \(\max-\min\)-comparisons.
This nesting is necessary and represents the prioritization of the already integrated features.
Also, we find sub-modelings for all remaining target values
\begin{align*}
    \dot q &= \alpha\min(0, \MAXPOUT - p_r), & \dot q &= \alpha\min(0, p_r - \MINPOUT), &
    \dot q &= \alpha\min(0, \MAXPIN - p_\ell), & \dot q &= \alpha\min(0, p_\ell - \MINPIN),
\end{align*}
where we can exploit our knowledge \eqref{eqn:sensitivities} of relations between flow and adjacent pressures of neighboring pipes \(\mathcal P_\ell^{\CV}\), \(\mathcal P_r^{\CV}\) of Figure \ref{fig:typical_cv_configurations}. 
Finally, we combine all single-aspect models within the nesting of \eqref{eqn:cv_derivation_stage_ii} and end up with the full regulator model
\begin{align}
    \begin{aligned}
        \dot q = \alpha\max(-q, \min(&p_\ell - \max(\MINPIN, p_r), \\
        &\min(\MAXPOUT, p_\ell) - p_r, \\
        &\max(\SETQ - q, p_\ell - \MAXPIN, \MINPOUT - p_r))),
    \end{aligned}\label{eqn:cv_full_model}
\end{align}
for \(\SETQ = \MINQ = \MAXQ\). Two possible modifications of \eqref{eqn:cv_full_model} are
\begin{align}
    \begin{aligned}
        \dot p_r + \dot q - \dot p_\ell = \alpha\max(-q, \min(&p_\ell - \max(\MINPIN, p_r), \\
        &\min(\MAXPOUT, p_\ell) - p_r, \\
        &\max(\SETQ - q, p_\ell - \MAXPIN, \MINPOUT - p_r))),
    \end{aligned}\tag{\ref{eqn:cv_full_model}.i}\label{eqn:cv_full_model_b}
\end{align}
or the limit system \(\dot q/\alpha \xrightarrow{\alpha \to \infty} 0\)
\begin{align}
    \begin{aligned}
        0 = \max(-q, \min(&p_\ell - \max(\MINPIN, p_r), \\
        &\min(\MAXPOUT, p_\ell) - p_r, \\
        &\max(\SETQ - q, p_\ell - \MAXPIN, \MINPOUT - p_r))).
    \end{aligned}\tag{\ref{eqn:cv_full_model}.ii}\label{eqn:cv_full_model_c}
\end{align}

\section{Modeling Regulators for Discrete Optimization} \label{sec:optimizationModel}
To be able to optimize over the regulator's target values in the context of time discretized transient gas network operation, we aim for a model suitable for discrete optimization solvers.
Our objective is to minimize the number of changes in the target values such that the control decisions induced by them are feasible to fulfill the demands of the network, which are usually given at the entries and exits.

For improved readability, we use \emph{indicator constraints} of the form
\begin{equation*}
  y = b \quad\rightarrow\quad a^T x \leq a_0 \qquad y,b\in\{0,1\}, x,a\in\setReals^i, a_0\in\setReals
\end{equation*}
stating that the constraint $a^T x \leq a_0$ for a set of variables $x = \{x_1, \dots, x_i\}$ is active if the binary variable $y$ attains the value $b$.
If $x$ is bounded, these can be reformulated using linear constraints, see for example~\citep{BonLodTraWie2015}.

\subsection{A Basic Regulator Model} \label{sec:optBasicRegulatorModel}
We use the following model describing the general behavior of a regulator $a=(\ell,r)$ from the literature, see for example \citep{KocHilPfeSch2015,HenAndHopTur2021}:
\begin{align*}
  \pressI{\ell}, \pressI{r} \in \setRealsNonNeg, \quad \mFlowI{a} &\in \setReals && \text{operation point variables} \\
  \modeAc{a}, \modeOp{a}, \modeCl{a}, \modeCv{a} &\in\{0,1\} && \text{mode variables}\\
\end{align*}
\begin{subequations}\label{eq:opt_basic_regulator_model}
\begin{align}
    \modeAc{a} + \modeOp{a} + \modeCl{a} + \modeCv{a}  &= 1\\
    \modeOp{a} + \modeCv{a} = 1 \quad\rightarrow\quad \pressI{\ell} &\leq \pressI{r} \\
    \modeAc{a} + \modeOp{a} = 1 \quad\rightarrow\quad \pressI{\ell} &\geq \pressI{r} \\
    \modeCl{a} + \modeCv{a} = 1 \quad\rightarrow\quad \mathmakebox[\widthof{\pressI{\ell}}][r]{\mFlowI{a}} &\leq 0 \\
    0 \leq \mFlowI{a} \enspace\quad &
\end{align}
\end{subequations}
The model features the mode variables \modeOp{a} for an open regulator, \modeCl{a} for a closed regulator, and \modeAc{a} for an active regulator with opening degree in $(0,1)$, which has also been used in \citep{HenAndHopTur2021}.
We add the mode $\modeCv{a}$ for a regulator closed by the internal check valve in case of $\pressI{\ell} \leq \pressI{r}$.
In this case, the target values to not influence the regulator according to Section~\ref{sec:description_target_value_control}.
Note that the open mode is often replaced by a ``bypass'' mode in the literature, representing a bypassing network path that also allows flow in the backwards direction, which is not included in our regulator model.

\subsection{Stable-Pushing Target Value Combinations}\label{sec:optStablePushingCombis}
In preparation for the model construction, we determine for each target value the cases in which its implied bound has to be obeyed.
While in reality, the adjustments of active elements caused by violated target values happen with a certain delay, 
we make the following assumption for our target value model:
\begin{assumption}\label{ass:perfectControl}
The control of each active element, i.e., the opening degree adjustment for regulators, reacts to changing target values or operation point conditions \emph{immediately} and with \emph{perfect precision} according to the given list of target value priorities \funcTvPrio.
Hence, we can assume the control is always fully adjusted to the given network situation and target values.
\end{assumption}
Note that this is equivalent to the simulation model \eqref{eqn:cv_full_model_c} assuming $\alpha \to \infty$.
We observe the following:
\begin{observation}\label{obs:oneViolatedTVOfHighestPrio}
Since \MINQ always exists and is violated, there always is at least one violated target value.
From these target values, we call the one with the highest priority \funcTvPrio the \emph{pushing} target value.
\end{observation}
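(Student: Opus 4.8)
The plan is to split Observation~\ref{obs:oneViolatedTVOfHighestPrio} into its two substantive parts: the existence claim, that the set of violated target values is never empty, and the well-definedness of the selection that names the pushing target value. The existence claim carries the mathematical content, whereas the naming is a consequence of the finiteness of the target value set together with the priority structure recorded in Table~\ref{tab:tvOverview}.

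For the existence claim I would argue directly from the conventions fixed in Section~\ref{sec:description_target_value_control}. The Target Value Existence paragraph postulates that \MINQ is always present, so it suffices to show that the bound it imposes is never satisfiable. By the convention that flow band regulators ($\infty \neq \MINQ < \MAXQ$) lie outside the scope of the paper, the minimal flow target value is fixed to $\MINQ = \infty$, and hence the imposed bound $\MINQ \leq \mFlow$ reads $\infty \leq \mFlow$. Since the mass flow is a finite real quantity — indeed $\mFlowI{a} \in \setReals$ in the basic model of Section~\ref{sec:optBasicRegulatorModel} — this inequality can never hold. Thus \MINQ is violated at every admissible operation point, and the set of violated target values contains at least this element.

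It then remains to check that selecting an element of highest priority is meaningful. Since this set is finite and, by the previous step, nonempty, the priority function \funcTvPrio attains a maximum on it. If the maximum is realized by a single target value, the pushing target value is literally unique; if it is realized by two target values of equal priority, then the stipulation of Section~\ref{sec:description_target_value_control} — that equal priorities are assigned only to target values influencing the control in the same way — guarantees that both candidates trigger the same direction of opening-degree change, so the induced control impulse, which is all that the subsequent stable-pushing construction relies upon, is unambiguous.

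The only step demanding genuine care, and hence the main obstacle, is pinning down the claim that \MINQ is always violated. Everything hinges on the modeling convention $\MINQ = \infty$ combined with finiteness of \mFlow, so I would state explicitly both that the degenerate value $\mFlow = \infty$ does not occur and that the flow band case, where $\MINQ < \MAXQ$ is finite and this argument would break down, has been excluded by assumption. Once this convention is fixed cleanly, the remainder is immediate from the definitions and from the finiteness of the set of target values.
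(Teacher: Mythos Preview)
Your argument is correct and, in fact, more explicit than what the paper itself provides: the paper offers no formal proof of Observation~\ref{obs:oneViolatedTVOfHighestPrio} at all, treating it as immediate from the conventions of Section~\ref{sec:description_target_value_control} (in particular the Infinite Minimal Flow Target Value and Target Value Existence paragraphs). Your decomposition into existence of a violated target value via $\MINQ = \infty$ and well-definedness of the maximum-priority selection, together with the remark on ties sharing a common direction, faithfully spells out precisely the reasoning the paper leaves implicit.
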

Note that there might be multiple violated target values of highest priority.
However, these always have the same \emph{direction}, i.e., either opening or closing altogether, see Table~\ref{tab:tvOverview}.

Now, we can deduce the following theorem.
\begin{theorem}\label{thm:regulatorStateCharacterization}
If a regulator $a$ is not closed due to its check valve, i.e., $\pressI{\ell} < \pressI{r}$, one of the following statements hold: 
\begin{enumerate}[1)]
\item There is at least one target value $\tv$, called the \emph{stable} target value, with higher priority than the pushing target value as well as opposing direction in terms of change in case of violation, such that $\tv$ is equal to its corresponding operation point quantity.
\item The regulator $a$ is fully open, i.e., in open mode, and the pushing target value is an opening target value.
\item The regulator $a$ fully closed, i.e., in closed mode, and the pushing target value is a closing target value.
\end{enumerate}
\end{theorem}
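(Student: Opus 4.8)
The plan is to read the three alternatives off the equilibrium that Assumption~\ref{ass:perfectControl} imposes: with instantaneous, perfectly precise control the opening degree $o\in[0,1]$ has settled, so the element exerts no net adjustment. Throughout I work under the stated hypothesis that the check valve does not force closure (so $\pressI{\ell}\geq\pressI{r}$), whence only the target-value logic of Table~\ref{tab:tvOverview} is in play. By Observation~\ref{obs:oneViolatedTVOfHighestPrio} a pushing target value exists; write $d$ for its direction, opening or closing. I would then case-split on the regulator's mode --- open ($o=1$), closed ($o=0$), or active ($o\in(0,1)$) --- and show that each lands in one of the three alternatives.

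First I would treat the boundary modes. In open mode with $d$ opening, the push merely holds $o$ against its upper limit $o=1$, which is alternative~2; symmetrically, closed mode with $d$ closing is alternative~3. It remains to handle open mode with $d$ closing and closed mode with $d$ opening, where the boundary equilibrium runs counter to the push. Here the push must be neutralized, and I claim this forces a strictly higher-priority opposing target value to sit exactly on its bound, yielding alternative~1. The levers are the prioritization rule and the remark after Observation~\ref{obs:oneViolatedTVOfHighestPrio} that equal-priority target values share one direction: only a strictly higher-priority target can overrule the push, such a target cannot be violated (else it would be the pushing target value instead), and by the monotone flow--pressure sensitivities~\eqref{eqn:sensitivities} it can obstruct motion only while sitting exactly on its bound.

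The core of the argument is the active mode $o\in(0,1)$, where neither boundary alternative can apply and I must produce alternative~1 directly. The reasoning is the same neutralization argument: the pushing target value drives $o$ in direction $d$, an interior equilibrium permits no motion, a same- or lower-priority target cannot oppose a higher-priority push, and a strictly higher-priority opposing target that were violated would itself be the pushing target value. Hence some strictly higher-priority opposing target value must equal its operation-point quantity exactly --- precisely the stable target value of alternative~1 --- whose priority exceeds that of the pushing target value and whose direction opposes $d$ by construction.

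I expect the main obstacle to be turning this \emph{blocking force} picture into a watertight statement, for which I would appeal to the limiting equation~\eqref{eqn:cv_full_model_c} that Assumption~\ref{ass:perfectControl} identifies with the logic. Writing its right-hand side as $\max(-\mFlowI{a},\, \min(T_1, T_2, \max(U_1, U_2, U_3)))$ and using $\mFlowI{a}\geq 0$, the equilibrium forces one of the inner constituents to vanish; the real work is to verify, across all possibilities for the pushing target value, that the vanishing constituent encodes an opposing target of \emph{strictly} higher priority. The delicate points are the degenerate coincidences where a vanishing term instead expresses the non-compression identity $\pressI{\ell}=\pressI{r}$ (open mode, routing to alternative~2) or the closed state $\mFlowI{a}=0$ (alternative~3), together with the priority merge in~\eqref{eqn:cv_full_model_c} where $\MINQ$ and $\MAXQ$ collapse into $\SETQ$; one must check that these do not manufacture a spurious same-priority cancellation that would break the strict priority gap demanded by alternative~1.
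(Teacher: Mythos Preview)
Your core argument --- that Assumption~\ref{ass:perfectControl} pins the opening degree, so either it sits at a boundary aligned with the push (alternatives~2/3) or some strictly higher-priority target of opposite direction blocks motion and must therefore be tight (alternative~1) --- is exactly the paper's proof. The paper organizes the split as ``why can $o$ not move?'' with two answers (it is already at the boundary in the push direction; or a higher-priority target would be violated by any move in that direction), whereas you split by mode first and then by push direction; both decompositions cover the same cases, and your explicit treatment of open-with-closing-push and closed-with-opening-push is simply absorbed into the paper's case~b).

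Your final paragraph is where you diverge from the paper, and you should drop it. The paper does not appeal to~\eqref{eqn:cv_full_model_c} at all: the theorem is stated and proved purely from the control description in Section~\ref{sec:description_target_value_control} together with Assumption~\ref{ass:perfectControl}, and it is then used to build the \emph{optimization} model, which is independent of the simulation ODE. Routing the argument through~\eqref{eqn:cv_full_model_c} would entangle it with the $\SETQ=\MINQ=\MAXQ$ collapse you rightly flag as a hazard --- a hazard that simply does not arise if you stay at the level of the prioritized control logic of Table~\ref{tab:tvOverview}. Your first three paragraphs already constitute the proof the paper gives; the fourth adds complication without buying rigor.
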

\begin{proof}
Observation~\ref{obs:oneViolatedTVOfHighestPrio} states the existence of the pushing target value as a violated target value of highest priority at any given point in time.
According to the description given in Section~\ref{sec:description_target_value_control}, the regulator tries to adjust the opening degree to reduce the corresponding violation.
However, according to Assumption~\ref{ass:perfectControl}, the opening degree is already perfectly adjusted and therefore does not change.
The two possible reasons are:
\begin{enumerate}[a)]
  \item The regulator cannot adjust the opening degree in the desired direction since it is already fully opened or closed.
  \item There is some higher priority target value that would be violated if the opening degree would be changed by any amount in the desired direction due to the induced operation point changes.
\end{enumerate}
For case a), either statement 2) or 3) holds, depending on the direction of the pushing target value's opening degree change.
For case b), the higher priority target value has to have the opposite opening degree change direction compared to the pushing target value.
Furthermore, the current target value must be equal to the corresponding operation point quantity since any change in the opening degree, no matter how small, would cause a violation.
Hence, statement 1) is true.
\end{proof}
Note that while the situations of 2) and 3) are coupled to a specific regulator mode, this is not the case for 1), where the two opposing target values may fix the opening degree at a position that just happens to be fully open or fully closed.

Using Theorem~\ref{thm:regulatorStateCharacterization}, we are able to compile a complete list of possible configurations in terms of stable and pushing target value combinations, which cover all possible states of a fully adjusted regulator according to Assumption~\ref{ass:perfectControl}.
It is displayed in Table~\ref{tab:stablePushingTVCombi}.
Each row represents either a possible stable-pushing target value combination with opposite directions or a single pushing target value without a stable target value counterpart. 
Since the pushing target value is by definition the highest priority target value that is violated, all bounds implied by target values having a higher priority have to be satisfied.
On the other side, all constraints of target values with priority lower than or equal to the pushing target value are irrelevant for that stable-pushing combination.
\begin{table}[t]
  \centering
  \setlength{\tabcolsep}{6pt}
  \renewcommand{\arraystretch}{1}
  \definecolor{finee}{RGB}{154, 203, 255}
  \definecolor{wrong}{RGB}{204,  43,  43}
  \definecolor{stabl}{RGB}{255, 188,  73}
  
  \newcommand{\styleFinee}{\cellcolor{finee}}
  \newcommand{\styleWrong}{\cellcolor{wrong}\color{white}}
  \newcommand{\styleStabl}{\cellcolor{stabl}}
  
  \begin{tabular}{lcccccccc}
    Mode & Stable & Pushing & \MINPIN & \MAXPOUT & \MAXPIN & \MINPOUT & \MAXQ & \MINQ
    \medskip \\
    any & \MAXQ    & \MINQ    & \styleFinee  $\min$    & \styleFinee  $\max$    & \styleFinee  $\max$    & \styleFinee $\min$     & \styleStabl $\hat{q}$ & \styleWrong $\max$ \\
    any & \MINPIN  & \MAXPIN  & \styleStabl $\hat{p}$ & \styleFinee  $\max$    & \styleWrong $\min$    & $\ast$                           & $\ast$                           & $\ast$                            \\
    any & \MINPIN  & \MINPOUT & \styleStabl $\hat{p}$ & \styleFinee  $\max$    & $\ast$                           & \styleWrong $\max$    & $\ast$                           & $\ast$                            \\
    any & \MINPIN  & \MINQ    & \styleStabl $\hat{p}$ & \styleFinee  $\max$    & \styleFinee  $\max$    & \styleFinee  $\min$    & \styleFinee  $\max$    & \styleWrong $\max$ \\
    any & \MAXPIN  & \MAXQ    & \styleFinee  $\min$    & \styleFinee  $\max$    & \styleStabl $\hat{p}$ & \styleFinee  $\min$    & \styleWrong $\min$    & $\ast$                            \\
    any & \MAXPOUT & \MAXPIN  & \styleFinee  $\min$    & \styleStabl $\hat{p}$ & \styleWrong $\min$    & $\ast$                           & $\ast$                           & $\ast$                            \\
    any & \MAXPOUT & \MINPOUT & \styleFinee  $\min$    & \styleStabl $\hat{p}$ & $\ast$                           & \styleWrong $\max$    & $\ast$                           & $\ast$                            \\
    any & \MAXPOUT & \MINQ    & \styleFinee  $\min$    & \styleStabl $\hat{p}$ & \styleFinee  $\max$    & \styleFinee  $\min$    & \styleFinee  $\max$    & \styleWrong $\max$\\
    any & \MINPOUT & \MAXQ    & \styleFinee  $\min$    & \styleFinee  $\max$    & \styleFinee  $\max$    & \styleStabl $\hat{p}$ & \styleWrong $\min$    & $\ast$                            \\
    closed & \quad$-$ & \MAXQ    & \styleFinee  $\min$    & \styleFinee  $\max$    & \styleFinee  $\max$    & \styleFinee $\min$     & \styleWrong $\min$    & $\ast$                            \\
    closed & \quad$-$ & \MINPIN  & \styleWrong $\max$    & $\ast$                           & $\ast$                           & $\ast$                           & $\ast$                           & $\ast$                            \\
    closed & \quad$-$ & \MAXPOUT & $\ast$                           & \styleWrong $\min$    & $\ast$                           & $\ast$                           & $\ast$                           & $\ast$                            \\
    open & \quad$-$ & \MINQ    & \styleFinee  $\min$    & \styleFinee  $\max$    & \styleFinee  $\max$    & \styleFinee $\min$     & \styleFinee  $\max$    & \styleWrong $\max$ \\
    open & \quad$-$ & \MAXPIN  & \styleFinee  $\min$    & \styleFinee  $\max$    & \styleWrong $\min$    & $\ast$                           & $\ast$                           & $\ast$                             \\
    open & \quad$-$ & \MINPOUT & \styleFinee  $\min$    & \styleFinee  $\max$    & $\ast$                           & \styleWrong $\max$    & $\ast$                           & $\ast$                             
  \end{tabular}
  \caption{The table shows all possible stable-pushing target value combinations, one for each line.
  In addition to the stable target value, the pushing target value, and the possible regulator modes, it shows the status of the bound for each target value encoded by the color of the cell: Blue stands for satisfied bounds, red for violated bounds, and orange for tight bounds in which the target value is equal to the operation point value.
  The text in the cells describes the necessary target value choice for each combination:
  The stable target value is marked by the quantity symbol combined with a ``\^{}''.
  For the violated and satisfied target values, the two possible values are ``min'' and ``max''.
  The value ``min'' encodes a target value that needs to be \emph{smaller} than the state value, while ``max'' represents a target value that needs to be \emph{larger} than the state value.
  The cells without coloring and filled with an asterisk do not have any relevant condition for the given row since the pushing target value has the same or a higher priority \funcTvPrio.}
  \label{tab:stablePushingTVCombi}
\end{table}

Note that in the case of non-existing target value types, the corresponding lines with non-existing stable or pushing target values, as well as the columns of the missing target value types, would be removed.

\subsection{A Mixed Integer Linear Formulation}\label{sec:optMILPModel}
We now present our mixed integer linear programming (MILP) target value model using the characterization derived in the previous section.
However, in contrast to the overview presented in Table~\ref{tab:stablePushingTVCombi}, we will only allow the usage of the active mode for all target value combinations featuring a stable target value.
This is based on the fact that we only use $\leq$ and $\geq$ relations to represent violated bounds since strict inequalities cannot be used in the MILP context.
As a result we formulate the following theorem:
\begin{theorem}\label{thm:opt_only_active_mode_for_stable}
Given a stable-pushing target value combination $c_1$ as well as a set of target values and state values satisfying the constraints implied by $c_1$.
Then these values also satisfy the constraints for an open mode combination $c_2$ and a closed mode combination $c_3$.
The two different pushing target values of $c_2$ and $c_3$ are equal to the stable and the pushing target value of $c_1$.
\end{theorem}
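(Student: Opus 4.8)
The plan is to prove the two containments purely at the level of the inequality and equality relations that the colored cells of Table~\ref{tab:stablePushingTVCombi} attach to each combination, showing that the relations defining $c_1$ imply those defining $c_2$ and $c_3$; the mode labels enter only through the surrounding disjunction and not through these inequality relations. First I would fix notation: write $\tvI{\mathrm{s}}$ for the stable and $\tvI{\mathrm{p}}$ for the pushing target value of $c_1$, so that $\funcTvPrioI{\tvI{\mathrm{s}}} > \funcTvPrioI{\tvI{\mathrm{p}}}$ and, by Theorem~\ref{thm:regulatorStateCharacterization}, the two have opposite change directions. Let $o \in \{\tvI{\mathrm{s}}, \tvI{\mathrm{p}}\}$ be the opening one and $k$ the closing one. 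I would then identify $c_2$ as the open-mode row whose single pushing target value is $o$, and $c_3$ as the closed-mode row whose single pushing target value is $k$, and note that both rows exist in the table because each opening (resp. closing) target value occurs as the pushing value of an open-mode (resp. closed-mode) row. This already matches the claim that the pushing values of $c_2$ and $c_3$ are precisely $\tvI{\mathrm{s}}$ and $\tvI{\mathrm{p}}$.

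The crux is an equality-splitting observation. In $c_1$ the stable target value is tight, i.e. it equals its operation-point quantity, and this single equality entails both one-sided bounds: the satisfied direction (its Table~\ref{tab:tvOverview} bound) and the reversed, violated direction used in the \textsc{milp}. The target value $\tvI{\mathrm{s}}$ determines which of $c_2, c_3$ has a pushing value of matching direction; in that combination $\tvI{\mathrm{s}}$ is the violated pushing bound, which the equality implies, while in the other combination $\tvI{\mathrm{s}}$ appears as a satisfied bound, which the equality also implies. Dually, the pushing value $\tvI{\mathrm{p}}$ of $c_1$ keeps its violated bound in the combination matching its own direction --- the same target value read with the same sense --- and becomes irrelevant in the other, since there the pushing value is $\tvI{\mathrm{s}}$ and every target value of priority at most $\funcTvPrioI{\tvI{\mathrm{p}}} < \funcTvPrioI{\tvI{\mathrm{s}}}$ is marked with an asterisk.

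It then remains to account for the remaining, non-stable, non-pushing columns by a priority bookkeeping argument. Any column relevant (non-asterisk) in $c_2$ or $c_3$ has priority strictly above that combination's pushing value; I would check that such a column is therefore also relevant in $c_1$ and was there either satisfied or tight, and that the satisfied direction of any given target value is fixed by its Table~\ref{tab:tvOverview} bound and hence identical across rows, so the implication transfers verbatim. Conversely, no column that is an asterisk in $c_1$ (priority at most $\funcTvPrioI{\tvI{\mathrm{p}}}$) can be relevant in $c_2$ or $c_3$. Combining the three parts yields that every bound of $c_2$ and of $c_3$ is implied by the bounds of $c_1$.

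The step I expect to be the main obstacle is exactly this priority bookkeeping: ensuring the priority comparisons line up so that each relevant bound of $c_2$ and $c_3$ is genuinely covered and never strengthened, uniformly across all sign and direction cases. The cleanest discharge is the argument above, keyed on whether $\tvI{\mathrm{s}}$ or $\tvI{\mathrm{p}}$ is the opening value; as a safety check I would also verify the claim directly by inspecting the nine stable-pushing rows against the six open- and closed-mode rows of Table~\ref{tab:stablePushingTVCombi}, confirming cell by cell that an orange (tight) or blue (satisfied) entry of $c_1$ dominates the blue, red, or asterisk entry in the corresponding column of $c_2$ and $c_3$.
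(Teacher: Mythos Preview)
Your proposal is correct and follows essentially the same approach as the paper: both hinge on the observation that, because violated bounds are written with $\leq$/$\geq$ rather than strict inequalities, the tight stable target value of $c_1$ can be read simultaneously as satisfied and as violated, so the stable and pushing target values of $c_1$ each serve as the pushing value of a non-active-mode combination, and opposite directions force one to be open and one closed. The paper's proof is considerably terser and leaves the priority bookkeeping for the remaining columns implicit, whereas you spell it out carefully; your explicit verification that every relevant (non-asterisk) column of $c_2$ and $c_3$ is already covered by a satisfied or tight entry of $c_1$ is exactly the content the paper's proof asserts in a single sentence.
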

\begin{proof}
Since we only use $\leq$ and $\geq$ to represent violated bound constraints, the stable target value of $c_1$ can be alternatively interpreted as a violation as well as a fulfillment of the corresponding target value bound.
Hence, the values satisfy the constraints of that non-active mode combination $c_4$, which has the stable target value of $c_1$ as pushing target value, i.e., that violated target value of highest priority.
The same is true for that non-active mode combination $c_5$, which has the pushing target value of $c_1$ as pushing target value without having an explicit stable target value.
Since the stable and pushing target values of $c_1$ have opposite directions, one is a closing target value, and one is an opening one.
Hence, one of the combinations $c_4$ and $c_5$ has the open mode and is the aforementioned combination $c_2$, and the other has the closed mode and is $c_3$.
\end{proof}
As a result of Theorem~\ref{thm:opt_only_active_mode_for_stable}, we assume that the regulator is in active mode iff it has a stable target value since all the open and closed mode cases \emph{with} stable target value are covered by corresponding combinations \emph{without} a stable target value.
This enables us to formulate the following MILP model for a target value control of a single regulator $a=(\ell,r)$ at time $t$.
As the model extends the basic regulator model \eqref{eq:opt_basic_regulator_model}, we will only state the additional variables and constraints.
First, we introduce some sets:
\begin{center}
\begin{tabular}{lcll}
  \setTargetValues & $:=$ & $\{\MINPIN, \MAXPIN, \MINPOUT, \MAXPOUT, \MINQ, \MAXQ \}$ & All target values \\ 
  \setTargetValuesI{a} &&& All target values \emph{existing} for regulator $a$ \\
  \setStaPushActive    & $:=$ & $\{(\MAXQ, \MINQ), (\MINPIN,\MAXPIN), (\MINPIN,\MINPOUT),  $ & All stable-pushing target value combinations for active \\
                       &      & $(\MINPIN,\MINQ), (\MAXPIN, \MAXQ), (\MAXPOUT, \MAXPIN), $ &  \\
                       &      & $(\MAXPOUT, \MINPOUT), (\MAXPOUT, \MINQ), (\MINPOUT, \MAXQ)\}$ \\
  \setStaPushOpen    & $:=$ & $\{ (-,  \MAXPIN), (-,  \MINPOUT), (-,  \MINQ)\}$ & All stable-pushing target value combinations for open \\
  \setStaPushClosed    & $:=$ & $\{ (-,  \MINPIN), (-,  \MAXPOUT), (-,  \MAXQ)\}$ & All stable-pushing target value combinations for closed \\
  \setStaPushActiveI{a} &&& All \emph{existing} combinations for active for regulator $a$\\
  \setStaPushOpenI{a} &&& All \emph{existing} combinations for open for regulator $a$\\
  \setStaPushClosedI{a} &&& All \emph{existing} combinations for closed for regulator $a$\\
  \setStaPushI{a}       & $:=$ & $\setStaPushActiveI{a} \cup \setStaPushOpenI{a} \cup \setStaPushClosedI{a}$ & All \emph{existing} combinations for regulator $a$
\end{tabular}
\end{center}

Now, we can specify the new variables as
\begin{align*}
  \tvI{x,a,t} &\in\left[\lbTVI{x,a},\ubTVI{x,a}\right] & \forall x &\in \setTargetValuesI{a} && \text{target values}\\
  \tvCombiI{y,a,t}  &\in\{0,1\} & \forall y &\in \setStaPushI{a} && \text{stable-pushing target value combinations}\\
  \objVarI{x,a,t}  &\in\{0,1\} & \forall x &\in \setTargetValuesI{a} && \text{indicator for target values changes}
\end{align*}
The \tv variables represent the target values chosen by the model.
They can be set inside the variable  bounds \lbTVI{x,a} and \ubTVI{x,a} for each target value $x$.
These are given for each individual regulator $a$ and can considerably restrict the feasible operating range of the regulator.
If, for example, the minimal left pressure target value \MINPIN has a lower bound value of $\SInumber{40}{bar}$, then the regulator has to operate at a left pressure value of at least $\SInumber{40}{bar}$ in active mode and open mode, since according to Table~\ref{tab:stablePushingTVCombi}, the target value \MINPIN can only be violated for the closed mode or if the regulator is closed due to the check valve.
The variables \tvCombi represents the stable-pushing target value combination chosen for each regulator $a$.
Finally, the \objVar variables indicate a change in the target value variables \tv between the values of the previous and the current time point.

For the following set of constraints, we introduce the function $o$ mapping from a given type of target value to the corresponding point of operation:
\begin{align*}
    o_t(x) :=  
    \begin{cases}
        \pressI{\ell,t} & \text{for } x \in \{\MINPIN, \MAXPIN\} \\
        \pressI{r,t}    & \text{for } x \in \{\MINPOUT, \MAXPOUT\} \\
        \mFlowI{a,t}     & \text{for } x \in \{\MINQ, \MAXQ\}
\end{cases}
\end{align*}
In addition, we use \funcTvPrio as a function from target value types to the priorities given in Table~\ref{tab:tvOverview} above.
Also, we introduce the parameter $\varepsilon$, which represents a relative tolerance used in the equality constraint of the stable target value.
As explained at the beginning of Section~\ref{sec:optStablePushingCombis}, the adjustment of the regulator's control to changing target values or operating point conditions happens with a certain delay in reality.
Hence, there usually is an offset between the used target values and the current operation point.
To better reflect this behavior and prevent too many target value changes caused by minor adjustments of the stable target value, we introduced the relative $\varepsilon$ value.

The constraints for our target value model are then given as
\begin{subequations}\label{eq:opt_tv_model}
\begin{align}
    \modeX{a,t} &= \sum_{y\in\setStaPushXI{a}} \tvCombiI{y,a,t} \quad \forall \mathrm{x}\in\{\mathrm{ac}, \mathrm{by}, \mathrm{cl}\} \label{eq:combiChoicePerMode}\\
    \sum_{y=(x,v) \in \setStaPushI{a}} \tvCombiI{y,a,t} = 1 \quad\rightarrow\quad &
    - \varepsilon o_t(x) \leq \tvI{x,a,t} - o_t(x) \leq \varepsilon o_t(x)
    \quad \forall x \in \setTargetValuesI{a} \label{eq:combiTVConnectionStable}\\
    \sum_{y=(u,x) \in \setStaPushI{a}} \tvCombiI{y,a,t} = 1 \quad\rightarrow\quad &
    \begin{cases}
    \tvI{x,a,t} \geq o_t(x) & \text{if } x \in \{\MINPIN, \MINPOUT, \MINQ\} \\
    \tvI{x,a,t} \leq o_t(x) & \text{if } x \in \{\MAXPIN, \MAXPOUT, \MAXQ\}
    \end{cases}
    \quad \forall x \in \setTargetValuesI{a} \label{eq:combiTVConnectionViolated}\\
    \sum_{y=(u,v) \in \setStaPushI{a}, x\neq u \,\land\, \funcTvPrioI{x} > \funcTvPrioI{v}} \tvCombiI{y,a,t} = 1 \quad\rightarrow\quad &
    \begin{cases}
    \tvI{x,a,t} \leq o_t(x) & \text{if } x \in \{\MINPIN, \MINPOUT, \MINQ\} \\
    \tvI{x,a,t} \geq o_t(x) & \text{if } x \in \{\MAXPIN, \MAXPOUT, \MAXQ\}
    \end{cases}
    \quad \forall x \in \setTargetValuesI{a} \label{eq:combiTVConnectionSatisfied}\\
    \objVar_{x,a,t} = 0 \quad\rightarrow\quad & \tvI{x,a,t} = \tvI{x,a,t-1} \quad \forall x \in \setTargetValuesI{a} \label{eq:defTVchangeVars}
\end{align}
\end{subequations}
Equation \eqref{eq:combiChoicePerMode} ensures the choice of exactly one stable-pushing target value combination fitting to the current mode.
Note that for the check-valve-closed mode, no combination is chosen.
Constraints \eqref{eq:combiTVConnectionStable}, \eqref{eq:combiTVConnectionViolated}, and \eqref{eq:combiTVConnectionSatisfied} establish the consequences of the choice of a stable-pushing target value combination and are based on the illustration of Table~\ref{tab:stablePushingTVCombi}.
Structure-wise these constraints collect for a given target value type \tv all stable-pushing target value combinations in which this type of target value is the stable one for \eqref{eq:combiTVConnectionStable}, is violated for \eqref{eq:combiTVConnectionViolated}, or is satisfied for \eqref{eq:combiTVConnectionSatisfied}.
If one of these combinations is chosen, the corresponding relations between the target value and the corresponding operation point are enforced.
Finally, constraint \eqref{eq:defTVchangeVars} defines the target value change variables, forcing the actual target value variable $\tvI{x,a,t}$ to keep their value if the change variable does not indicate a change, i.e., is zero.

As objective function, we minimize the number of target value changes:
\begin{equation*}
  \min \sum_{a,t} \sum_{x \in \setTargetValuesI{a}} \objVarI{x,a,t}.
\end{equation*}

Note that \citep{PfeFueGeiGei2014,Schmidt2015,KocHilPfeSch2015} introduced a model for regulators without remote access that have a set-point control for the maximum right pressure with value $\press^\mathrm{set}$.
Depending on whether the right pressure is larger than, equal to, or less than $\press^\mathrm{set}$, the mode is closed, active, or open, respectively.
In addition, these regulators also feature a check valve behavior, which in their case allows flow against the regulator's orientation (bypass instead of open mode).
Except for this back flow, the described behavior can as well be achieved with our formulation, for example, by setting $\tvI{\MAXPOUT}=\press^\mathrm{set}$, $\tvI{\MINPIN} = 0$, and $\tvI{\MINPOUT} = \infty$.
The target value \MINPIN is always satisfied and \MINPOUT is always violated.
Hence, we are either in active mode and have a stable \MAXPOUT, with $\pressI{r}$ equal to the set-point value, or \MAXPOUT is violated, leading to a closed regulator, or $\pressI{r}$ is below \MAXPOUT and satisfied, leading to the open mode.
This shows that our optimization model covers regulators with and without remote access.

\section{Numerical Evaluation}\label{sec:numeric_evaluation}

To show and verify the accuracy of our models, we present in this section two experiments using the target value control on a single regulator.
It is embedded in a simple path network between two pipes, each with $L=\SInumber{10}{km}, D=\SInumber{0.9}{m}, r=\SInumber{0.012}{mm}$, see Figure~\ref{fig:numeric_example_net}.
\begin{figure}[h!]
    \centering
    \begin{tikzpicture}[circuit, circuit symbol unit=6pt]
        \coordinate (LL)  at (0, 0);
        \coordinate (L)   at (3, 0);
        \coordinate (R)   at (6, 0);
        \coordinate (RR)  at (9, 0);

        \draw (LL) to [pi] (L);
        \draw (L) to [cv] (R);
        \draw (R) to [pi] (RR);
        
        \node[fill=white, draw, circle, label=above:{$n_\textrm{in}$}] at (LL)  {};
        \node[fill=white, draw, circle, label=above:{$n_\ell$}] at (L)   {};
        \node[fill=white, draw, circle, label=above:{$n_r$}] at (R)   {};
        \node[fill=white, draw, circle, label=above:{$n_\textrm{out}$}] at (RR)  {};
    \end{tikzpicture}
    \caption[Benchmark network]{Benchmark network, consisting of one regulator arc \inlinegraphics{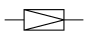} and two pipes \inlinegraphics{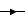} surrounding the regulator.}
    \label{fig:numeric_example_net}
\end{figure}

For both scenarios, we determine the solution using the dynamic simulation model presented in Section~\ref{sec:simulationModel} as well as the optimization model presented in Section~\ref{sec:optimizationModel}.
The reference solution is determined using the SIMONE simulator\footnote{LIWACOM Informationstechnik GmbH, SIMONE software, \url{https://www.liwacom.de/}, accessed: 2021-11-22}, which has established itself as a de facto standard due to its wide use in industry. 
We will refer to solutions computed with SIMONE as industry simulations as a convenient shorthand.

The initial steady state of the scenarios features a flow of $\SInumber{10}{kg/s}$ on all arcs as well as inflow at the entry and outflow at the exit.
The pressure ranges from $\SInumber{50}{bar}$ at $n_\mathrm{in}$ to $\SInumber{49.992}{bar}$ at $n_\mathrm{out}$, the regulator is fully open, the constant gas temperature is set to $\SInumber{283.15}{K}$, and the gas mixture is, for the sake of simplicity, assumed to be pure methane.

As pipe model for the simulation, we use the left-right-discretization \eqref{eqn:pipe_ode_LR}, equipped with the linear AGA formula for the real gas factor.
Note that we chose $\alpha=1.0\mathrm{e}3$ for the simulation model.

For the optimization, we take the linearized pipe model \eqref{eqn:new_momentum_const_v} discretized with the implicit box scheme.
Furthermore, we use a constant real gas factor $z$, which is determined for each node using the formula of Papay and computed from the initial state.
The linearized model is not reliable in the case of volatile flow conditions but works fine for the following scenarios featuring similar flows for most of the time.
As a benefit, the resulting overall model is linear, which enables us to use a MILP solver for the optimization, in our case Gurobi \citep{Gur2020}. 
Furthermore, we picked $\varepsilon=0$ for the optimization model.

\subsection{First Scenario}
For the first scenario, we use a set of target values obtained from our project partner, the gas network operator Open Grid Europe GmbH (OGE).
These target values should lead to different opening degree and throughput changes of the regulator while assuming a steady inflow and outflow at the boundaries of $\SInumber{10}{kg/s}$ over the whole time horizon of $\SInumber{12}{h}$. 
This scenario is a stress test.
It has been designed to provoke relevant interferences from the regulator involving almost each available target value. 
An overview of the different switching actions of the target values is provided in Table~\ref{tab:numeric_example_OGE_TVs}.
\begin{table}[ht]
    \centering
    \newcommand{\myBar}{\mathmakebox[0.5cm][l]{\,\SIunit{bar}}}
    \newcommand{\myKgS}{\mathmakebox[0.5cm][l]{\,\SIunit{kg/s}}}
    \begin{tabular}{ccr}
        TV & Time as hh:mm & Value \\
        \MINPIN  & 00:00 &  $48.0\myBar$\\
        \MAXPOUT & 00:00 &  $55.0\myBar$\\
        \MAXPIN  & 00:00 & $100.0\myBar$\\
        \MINPOUT & 00:00 &  $40.0\myBar$\\
        \MAXQ    & 00:00 &   $9.0\myKgS$\\
        \MAXQ    & 01:00 &  $15.0\myKgS$\\
        \MAXQ    & 02:00 &   $6.0\myKgS$\\
        \MAXQ    & 02:30 &  $10.0\myKgS$\\
        \MAXPOUT & 03:30 &  $47.0\myBar$\\
        \MAXPOUT & 04:30 &  $55.0\myBar$\\
        \MINPIN  & 05:00 &  $55.0\myBar$\\
        \MINPIN  & 05:30 &  $53.0\myBar$\\
        \MINPOUT & 06:30 &  $46.0\myBar$\\
        \MAXQ    & 06:30 &   $6.0\myKgS$\\
        \MINPOUT & 07:00 &  $46.5\myBar$\\
        \MINPOUT & 07:30 &  $47.5\myBar$\\
    \end{tabular}
    \caption{The target values specified by Open Grid Europe GmbH (OGE) in our benchmark scenario. Each row represents a target value change of the given type of target value to the given value at the given time. The time is specified as hours and minutes past the initial state.}
    \label{tab:numeric_example_OGE_TVs}
\end{table}
For the optimization, the simulation, as well as the reference industry simulation, we use a time discretization of $\SInumber{3}{min}$. Note that the target values are fixed for the optimization here, which effectively transforms the problem into a plain feasibility problem.

%
%  2020_04_01__a
%
\begin{figure}[!htb]
    \centering
    % trim: left bottom right top
    \includegraphics[trim={0.0cm 0.0cm 0.0cm 0.4cm},clip,width=\linewidth]{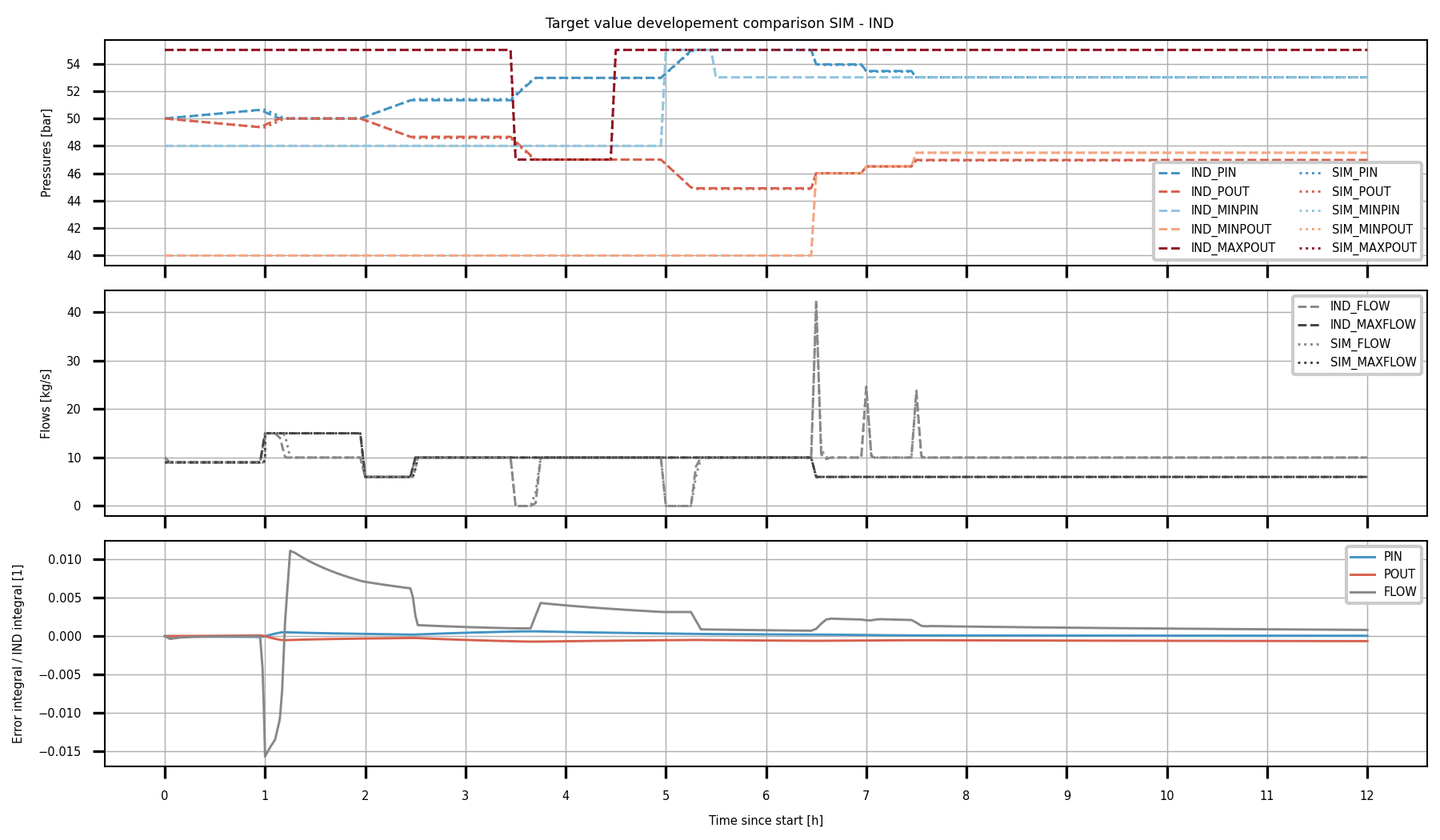}
    \caption{Comparison of the simulated solution (SIM) with the reference simulation from the industry simulator (IND) for the first scenario with predefined target values based on Table~\ref{tab:numeric_example_OGE_TVs}. The pictures show the left and right pressure values and target values of the regulator, the flow values and target values, and the difference of both solutions in the three quantities integrated over time and normalized by the quantity integral of the IND values.}
    \label{fig:OGE_Scen_SIMvsIND}
\end{figure}

\begin{figure}[!htb]
    \centering
    \includegraphics[trim={0.0cm 0.0cm 0.0cm 0.4cm},clip,width=\linewidth]{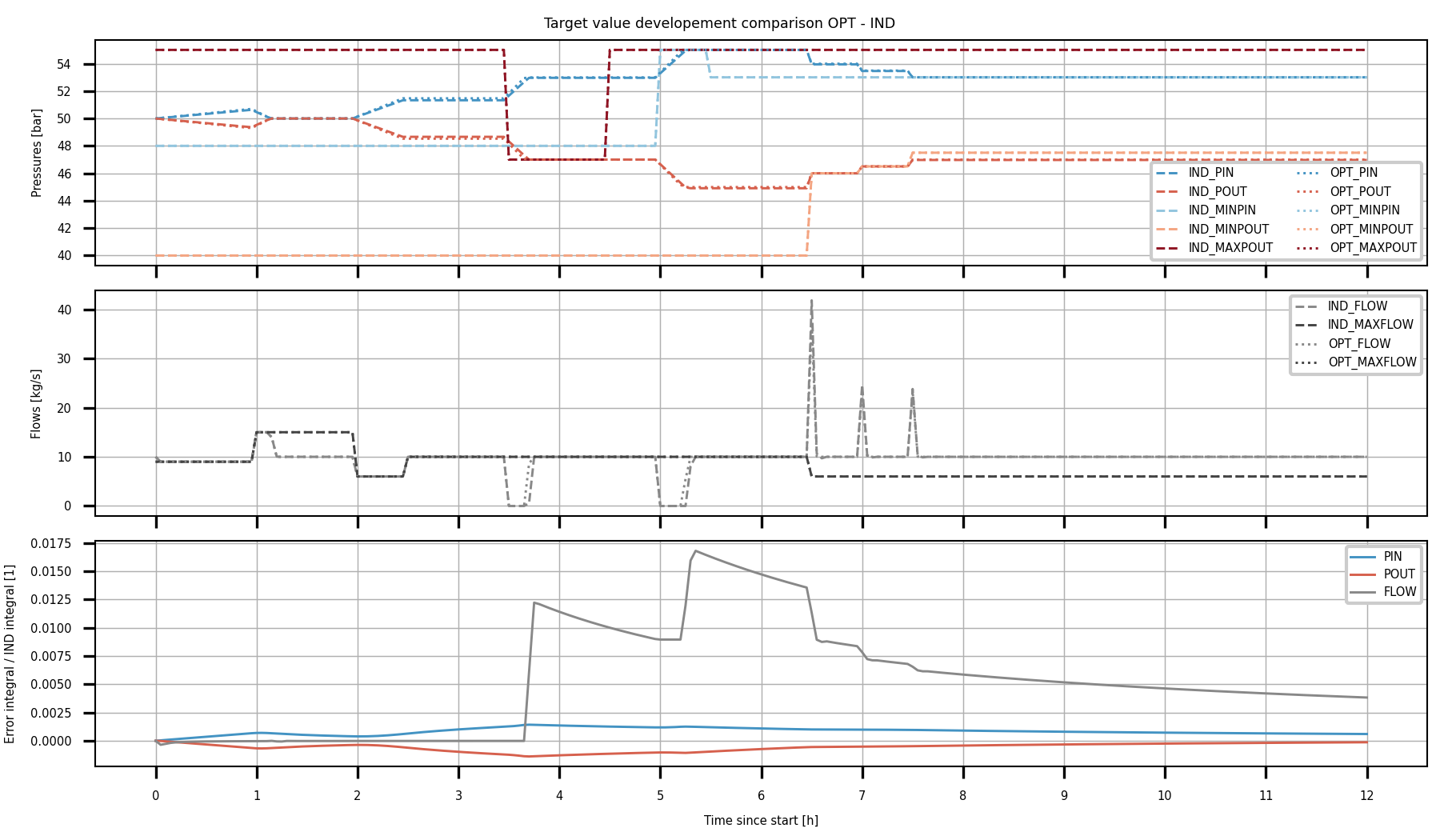}
    \caption{Comparison of the optimized solution (OPT) with the reference simulation from the industry simulator (IND) for the first scenario with predefined target values based on Table~\ref{tab:numeric_example_OGE_TVs}. The pictures show the left and right pressure values and target values of the regulator, the flow values and target values, and the difference of both solutions in the three quantities integrated over time and normalized by the quantity integral of the IND values.}
    \label{fig:OGE_Scen_OPTvsIND}
\end{figure}

\begin{figure}[!htb]
    \centering
    \includegraphics[trim={0.0cm 0.0cm 0.0cm 0.4cm},clip,width=\linewidth]{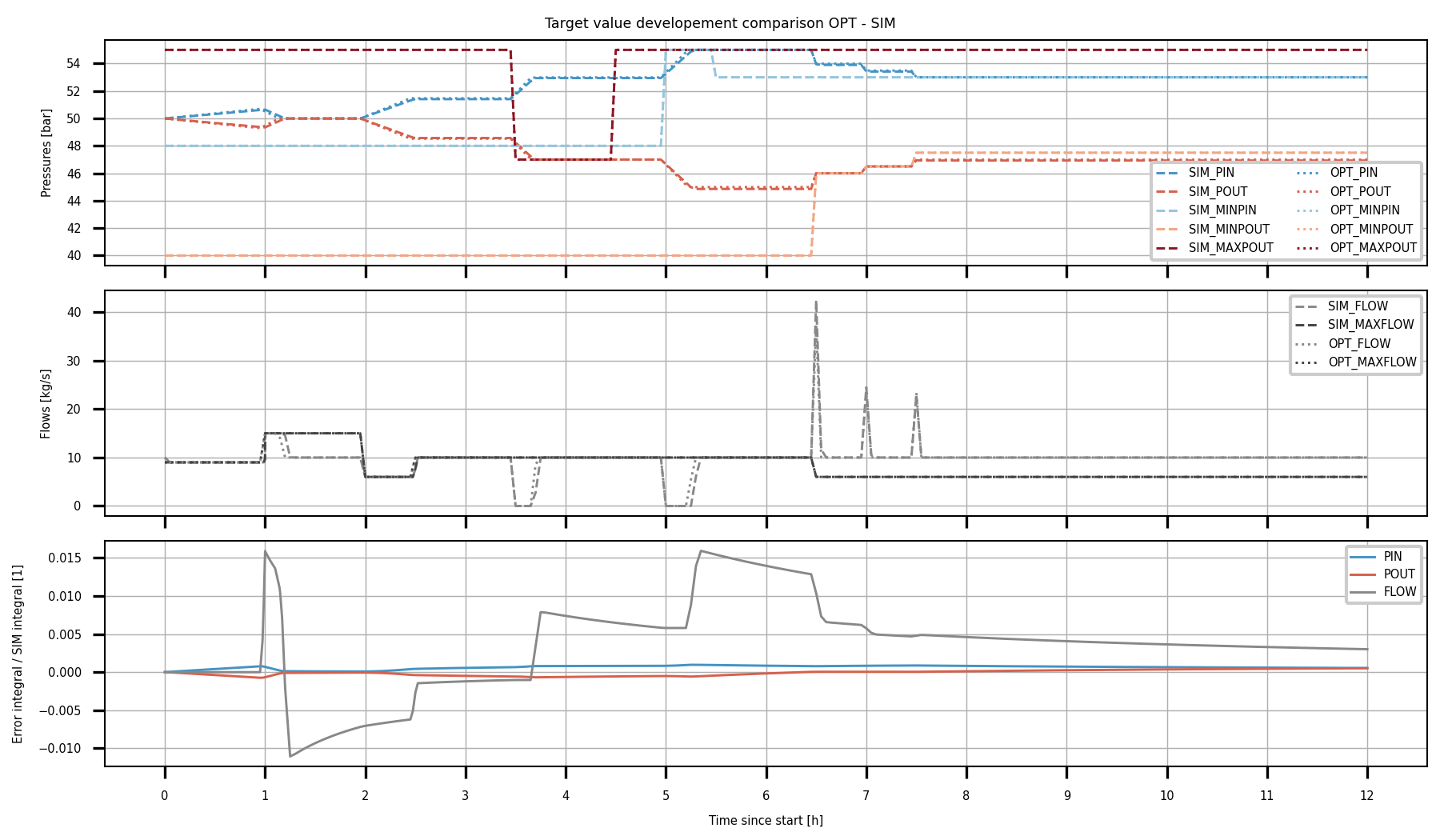}
    \caption{Comparison of the optimized solution (OPT) with the reference simulated solution (SIM) for the first scenario with predefined target values based on Table~\ref{tab:numeric_example_OGE_TVs}. The pictures show the left and right pressure values and target values of the regulator, the flow values and target values, and the difference of both solutions in the three quantities integrated over time and normalized by the quantity integral of the SIM values.}
    \label{fig:OGE_Scen_OPTvsSIM}
\end{figure}

The results for the first scenario are shown in Figure~\ref{fig:OGE_Scen_SIMvsIND} comparing the simulated solution (SIM) with the one computed by the industry simulator (IND), Figure~\ref{fig:OGE_Scen_OPTvsIND} compares the optimized solution (OPT) with the one from the industry simulator, and Figure~\ref{fig:OGE_Scen_OPTvsSIM} compares the optimized and simulated solution.
Each figure shows the development of the left and right pressure values and target values of the regulator in the upper box and the development of the flow values and target values in the middle box.
The lowest box displays the relative error defined as the integrated difference over time divided by the time-integrated reference solution, i.e., let \(\mathrm{SOL} \neq \mathrm{REF} \in \{\mathrm{SIM}, \mathrm{OPT}, \mathrm{IND}\} \) and \(x \in \{p_\ell, p_r, q\}\), then
\begin{align*}
\text{relative error}[x](t) \equiv \frac{\int_{t_0}^t \mathrm{SOL}[x](s) - \mathrm{REF}[x](s) \mathrm ds}{\int_{t_0}^t \mathrm{REF}[x](s) \mathrm ds}.     
\end{align*}
At the initial start, all the pressure target values are fulfilled, but $\MAXQ=\SInumber{9}{kg/s}$ is violated.
Hence, the regulator closed until $q=\MAXQ$.
At 01:00, \MAXQ is set to $\SInumber{15}{kg/s}$.
Hence, the regulator fully opens again, the pressures equalize, and we go back to the initial steady state.
Next, \MAXQ is set to $\SInumber{6}{kg/s}$ for 30 minutes and then goes to $\SInumber{10}{kg/s}$.
The regulator enforces corresponding values of $q$, which first create a pressure imbalance by reducing the flow and then stabilizes this imbalance.
At 3:30, we start using the pressure target values by setting \MAXPOUT to $\SInumber{47.0}{bar}$, which is violated.
Since \MAXPOUT has the highest priority, the regulator closes until $p_r=\SInumber{47.0}{bar}$ and then keeps this value, i.e., $q=\SInumber{10}{kg/s}$ is established again.
After setting back \MAXPOUT, the situation does not change since \MAXQ still enforces $q=\SInumber{10}{kg/s}$.
Now, $\MINPIN=\SInumber{55.0}{bar}$ is set, which again is violated and of high priority.
Therefore, we further increase the pressure imbalance until reaching $p_\ell=\SInumber{55.0}{bar}$ and keep this situation stable, even though \MINPIN is reduced to $\SInumber{53}{bar}$ again shortly after.
The next new setting is \MINPOUT to $\SInumber{46.0}{bar}$ at 06:30, which is a violation of the target value.
Since \MINPIN and \MAXPOUT are currently fulfilled, this forces the regulator to open until $p_r=\SInumber{46.0}{bar}$ is reached.
Here, we stabilize the flow at $q=\SInumber{10}{kg/s}$ again.
The reduction of \MAXQ to $\SInumber{6}{kg/s}$ does not change this since \MINPOUT is of higher priority than \MAXQ.
As final changes, we first set \MINPOUT to $\SInumber{46.5}{bar}$ and then to $\SInumber{47.5}{bar}$.
The first change shifts the stable situation to $p_r=\SInumber{46.5}{bar}$.
However, we do not reach $p_r=\SInumber{47.5}{bar}$ afterwards since this would violate $\MINPIN=\SInumber{53}{bar}$.
Hence, we stabilize at $p_\ell=\SInumber{53.0}{bar}$ and keep this situation until the end of the time horizon.

All three solutions are very similar to each other in terms of the development of the regulator's operation point over time.
Hence both derived models, which are the simulation model and the optimization model, do catch the regulator's behavior compared to the industrial simulation.
As an overview, we listed below the maximal relative error and the relative error at the end of the time horizon for the pressure values, where we always choose the bigger of the two errors given for the left and right pressure, as well as the flow values.
\begin{center}
\begin{tabular}{rrrrr}
            & max $p$ error & max $q$ error & end $p$ error & end $q$ error \\
 SIM vs IND & 0.07\,\% & 1.57\,\% & 0.06\,\% & 0.08\,\% \\
 OPT vs IND & 0.14\,\% & 1.68\,\% & 0.06\,\% & 0.38\,\% \\
 OPT vs SIM & 0.10\,\% & 1.59\,\% & 0.05\,\% & 0.30\,\%
\end{tabular}
\end{center}
The values show that the differences in the pressures are, in general, smaller than those in the flows.
This is due to the fact that each of the three methods has a different internal implementation of the reaction of the regulator's opening degree to changing target values conditions.
Hence, even though the time discretization is the same for all approaches, the flow values are different in the unstable periods shortly after a target value change leading to a new violation.
However, these differences only occur for brief periods of time.
We also note that the differences between the two simulation procedures are, in general, smaller than those between the optimization and one of the simulation runs.
This is no surprise, as we assume an idealized target value behavior for the optimization model, see Assumption~\ref{ass:perfectControl}.
However, we are glad to see how close the solution of the idealized model is to both simulations.

\subsection{Second Scenario}
In the second scenario, we tested both models in a different setup.
Here, we determine the regulator's target values using the optimization model and verify them using the simulation model.
Since this task is quite demanding in a volatile environment, even for only a single regulator, we discretized the time horizon for the optimization model into steps of 15 minutes, leading to 48 time steps in total, whereas the simulation still solves in $\SInumber{3}{min}$ time step resolution.

To ensure and provoke the necessity of target value adjustments, we prescribed not only the inflow at the entry and outflow at the exit but also the pressure values over time at both nodes.
We took these pressure values of the OPT solution from scenario 1 with time step resolution of \(\SInumber{15}{min}\).
This enables us to compare the number of target value changes between the OPT solutions of scenarios 1 and 2 and provide an estimate of the optimization potential.

There is a fundamental difference in the interpretation of value updates of target values between the optimization and the simulation model.
A simulation starts with the control adjustment at some point in time, let it be $t$, and proceeds over the following time steps with the adjustment to the new target value(s).
In contrast to this and due to Assumption~\ref{ass:perfectControl}, the optimization model assumes a control which has already adjusted to the new target value directly at time $t$.
Hence, it basically assumes that the target values have been changed at some point within the 15 minute time step preceding time $t$.
Here, we have a \emph{degree of freedom} and could eliminate this by a subsequent \emph{event detection} approach to narrow down the event-interval of the switch to fit the time-resolution of the following simulation.
Instead, we use a heuristic and will communicate changes in the target values that occurred at time $t$ in the optimization model to both simulation processes with time $t - \Delta_t/2$, where $\Delta_t$ is the time resolution of the optimization model, i.e., $\Delta_t=\SInumber{15}{min}$.
This minimizes the maximum possible error between the used and the actual change time.
Note that we round $\SInumber{7.5}{min}$ to the integer value of $\SInumber{8}{min}$ here.

\begin{table}[ht]
    \centering
    \newcommand{\myBar}{\mathmakebox[0.5cm][l]{\,\SIunit{bar}}}
    \newcommand{\myKgS}{\mathmakebox[0.5cm][l]{\,\SIunit{kg/s}}}
    \begin{tabular}{cccr}
        TV & \multicolumn{2}{c}{Time as hh:mm} & Value \\
           & OPT               & SIM \\
        \MINPIN  & 00:00 & 00:00 &  $48.0\myBar$\\
        \MAXPOUT & 00:00 & 00:00 &  $55.0\myBar$\\
        \MAXPIN  & 00:00 & 00:00 & $100.0\myBar$\\
        \MINPOUT & 00:00 & 00:00 &  $40.0\myBar$\\
        \MAXQ    & 00:00 & 00:00 &   $9.0\myKgS$\\
        \MAXQ    & 01:00 & 00:52 &  $13.0\myKgS$\\
        \MAXQ    & 02:00 & 01:52 &   $6.0\myKgS$\\
        \MINPOUT & 02:30 & 02:22 &  $48.5\myBar$\\
        \MAXPOUT & 03:30 & 03:22 &  $47.0\myBar$\\
        \MAXPOUT & 05:00 & 04:52 &  $45.0\myBar$\\
        \MAXPOUT & 06:30 & 06:22 &  $46.0\myBar$\\
        \MAXPOUT & 07:00 & 06:52 &  $46.5\myBar$\\
        \MAXPOUT & 07:30 & 07:22 &  $47.0\myBar$
    \end{tabular}
    \caption{The target values determined by the optimization model. Each row represents a target value change of the given type of target value to the given value at the given time. The time is specified as hours and minutes past the initial state. The OPT time is the one determined by the optimization, and the SIM time is the one communicated to both simulation processes.}
    \label{tab:numeric_example_Opt_TVs}
\end{table}
%
%  2020_04_02__b
%
\begin{figure}[!htb]
    \centering
    \includegraphics[trim={0.0cm 0.0cm 0.0cm 0.4cm},clip,width=\linewidth]{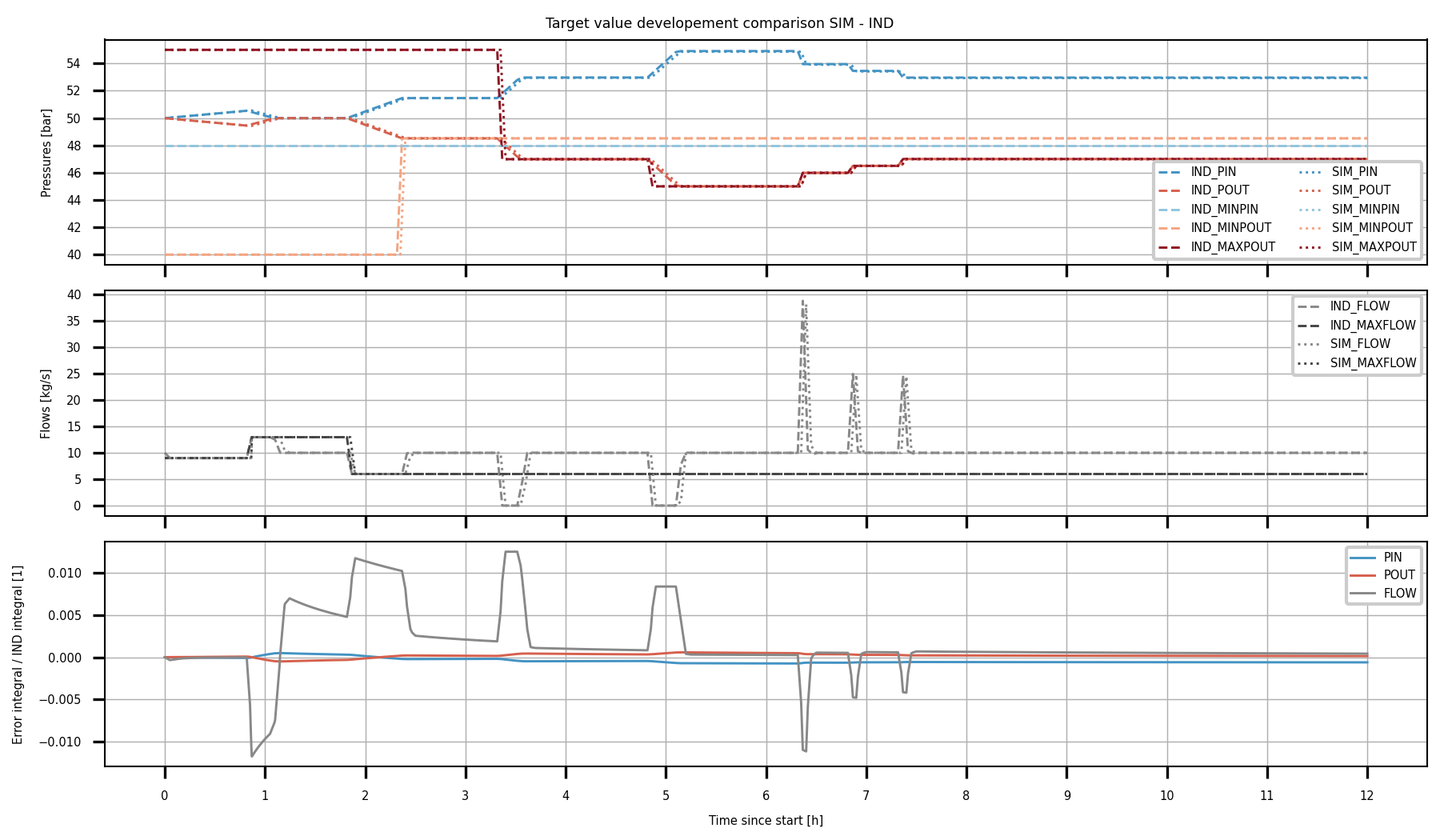}
    \caption{Comparison of the simulation (SIM) with the reference simulation from the industry simulator (IND) for the second scenario with optimized target values, see Table~\ref{tab:numeric_example_Opt_TVs}. The pictures show the left and right pressure values and target values of the regulator, the flow values and target values, and the difference of both solutions in the three quantities integrated over time and normalized by the quantity integral of the IND values.}
    \label{fig:Opt_Scen_SIMvsIND}
\end{figure}

\begin{figure}[!htb]
    \centering
    \includegraphics[trim={0.0cm 0.0cm 0.0cm 0.4cm},clip,width=\linewidth]{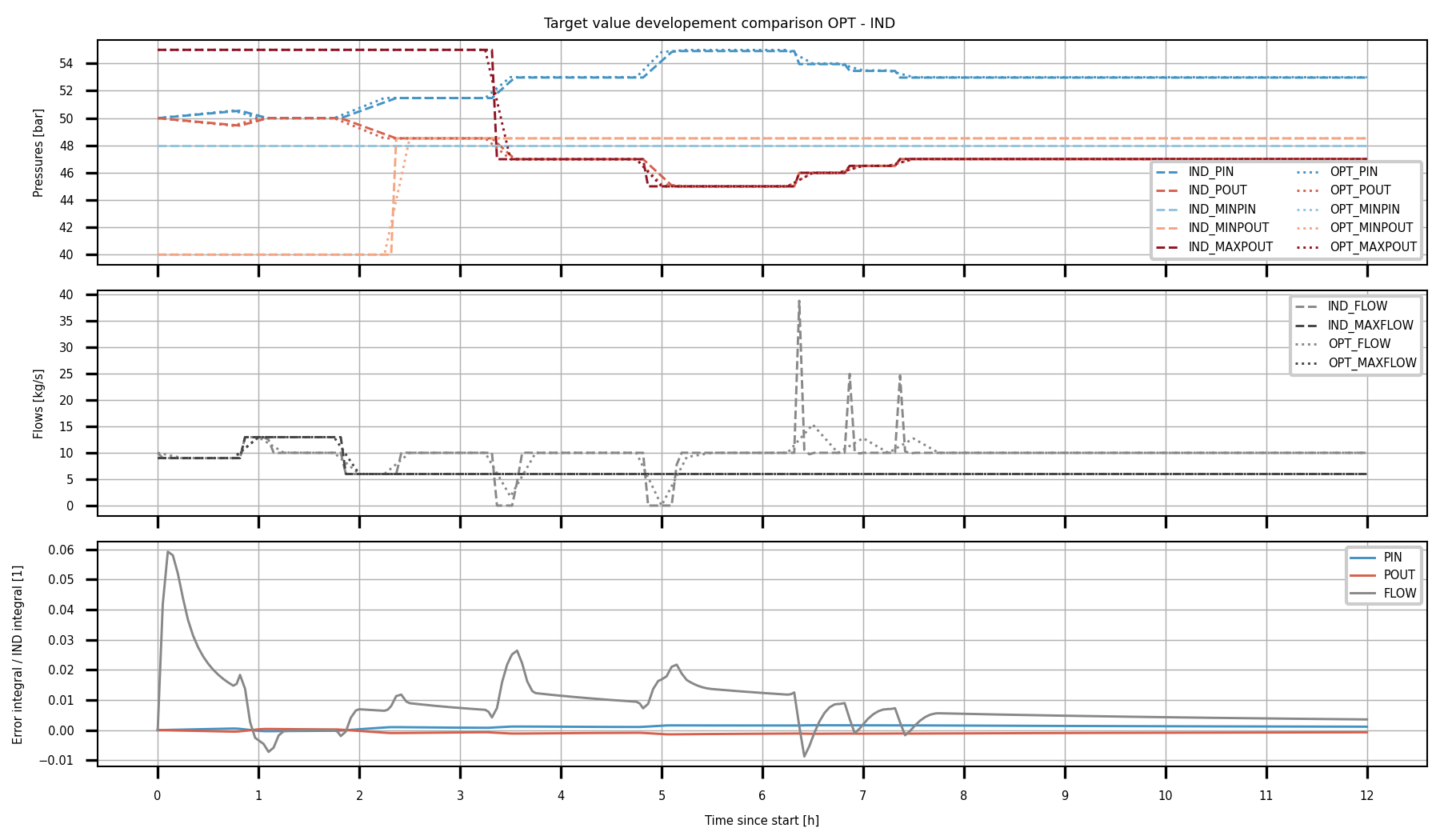}
    \caption{Comparison of the optimized solution (OPT) with the reference simulation from the industry simulator (IND) for the second scenario with optimized target values, see Table~\ref{tab:numeric_example_Opt_TVs}. The pictures show the left and right pressure values and target values of the regulator, the flow values and target values, and the difference of both solutions in the three quantities integrated over time and normalized by the quantity integral of the IND values.}
    \label{fig:Opt_Scen_OPTvsIND}
\end{figure}

\begin{figure}[!htb]
    \centering
    \includegraphics[trim={0.0cm 0.0cm 0.0cm 0.4cm},clip,width=\linewidth]{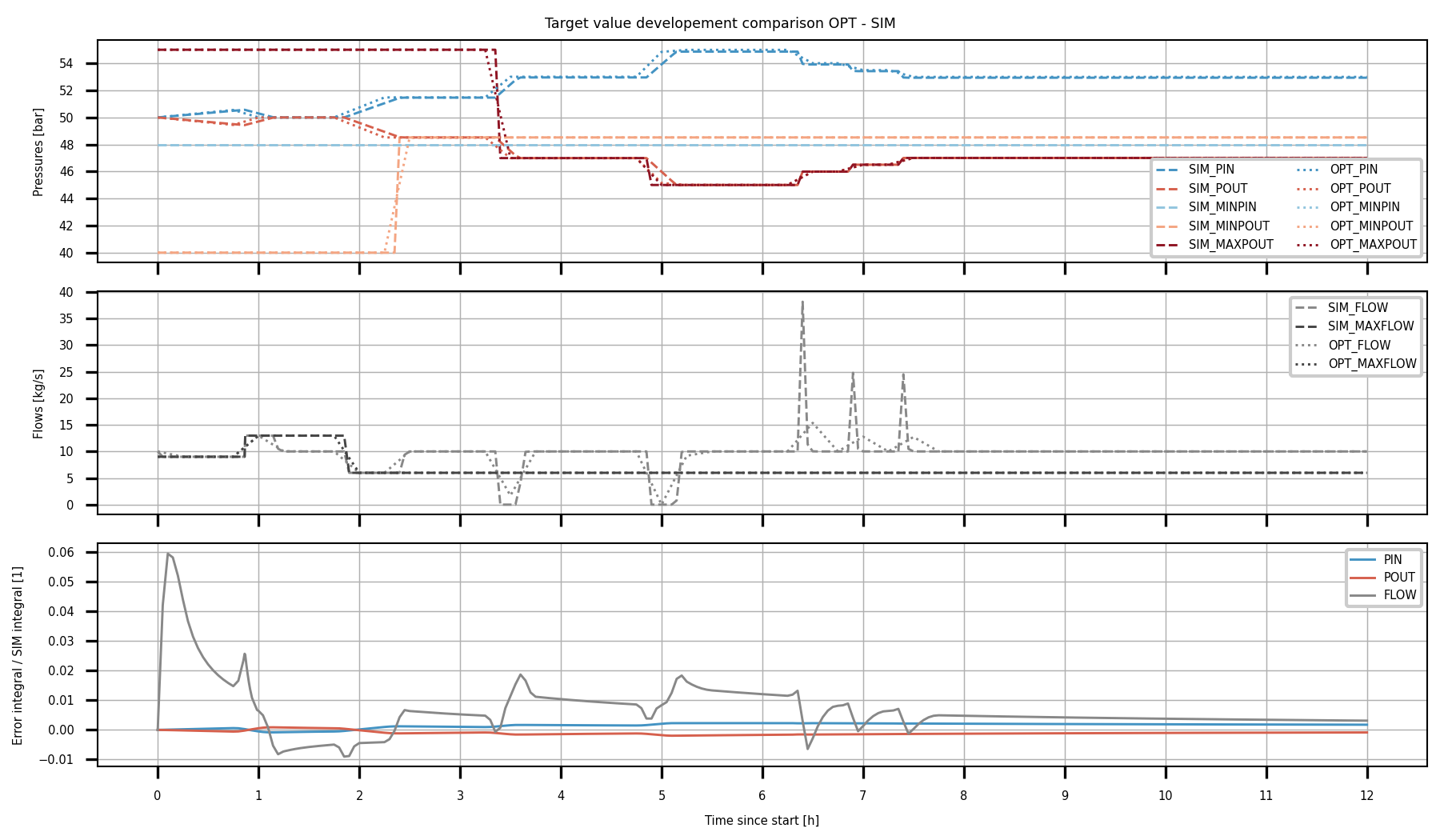}
    \caption{Comparison of the optimized solution (OPT) with the reference simulated solution (SIM) for the second scenario with optimized target values, see Table~\ref{tab:numeric_example_Opt_TVs}. The pictures show the left and right pressure values and target values of the regulator, the flow values and target values, and the difference of both solutions in the three quantities integrated over time and normalized by the quantity integral of the SIM values.}
    \label{fig:Opt_Scen_OPTvsSIM}
\end{figure}
The target values determined by the optimization are listed in Table~\ref{tab:numeric_example_Opt_TVs}.
The comparisons of the different solutions over time are shown in Figure~\ref{fig:Opt_Scen_SIMvsIND} for the simulation (SIM) and the industry simulation (IND), in Figure~\ref{fig:Opt_Scen_OPTvsIND} for optimized solution (OPT) and the industry simulation, and in  Figure~\ref{fig:Opt_Scen_OPTvsSIM} for the optimized and simulated solution.

When comparing the listed target value changes in Tables~\ref{tab:numeric_example_OGE_TVs} and~\ref{tab:numeric_example_Opt_TVs}, we observe that we only needed 8 target value changes instead of the previous 11 to archive the regulator control.
In the optimized solution, we do not use the changes of \MAXPOUT at 04:30 and \MAXQ at 06:30 as they do not affect the regulator's control. 
In addition, the control change at 07:30, which was a consequence of changing \MINPIN at 05:30 and \MINPOUT at 07:30, could be realized by a single change of \MAXPOUT at 07:30.
Similar to this last case, some of the other control changes are now triggered by different target value changes.
Another example would be the change at 05:00, which was previously caused by setting \MINPIN to $\SInumber{55}{bar}$ and is now triggered by setting \MAXPOUT to $\SInumber{45}{bar}$.

When looking at the solutions over time, the development is again quite similar, which can also be observed from the table below.
It shows the maximal relative error and the relative error at the end of the time horizon for the pressure values, where we always choose the bigger of the two errors given for the left and right pressure, as well as the flow values.
\begin{center}
\begin{tabular}{rrrrr}
            & max $p$ error & max $q$ error & end $p$ error & end $q$ error \\
SIM vs IND & 0.07\,\% & 1.26\,\% & 0.06\,\% & 0.04\,\% \\
OPT vs IND & 0.16\,\% & 5.92\,\% & 0.11\,\% & 0.35\,\% \\
OPT vs SIM & 0.23\,\% & 5.95\,\% & 0.17\,\% & 0.31\,\%
\end{tabular}
\end{center}
The error values of the comparison between our simulation and the one of the industry software are consistent with those of the first scenario.
This is not surprising, as only the simulated instance changes while the general setup for both stays the same.
When comparing one of the two and the optimized solution, we clearly see the difference to the values of the first scenario.
Especially the maximum flow error, but also the maximal and overall errors in the pressure have increased significantly.
This is an expected change, as the optimization now uses a more coarse discretization.
Nevertheless, the pressure differences are still very small in general and not too far away from the values of either simulation.
Regarding the errors in the flow, the main difference occurs in the very first minutes of the scenario.
Since the initial target values are violated, the control adjusts immediately, which happens during the first time step for all of the three approaches.
Due to the different time discretizations, the adjustment is 5 times faster for the simulations than for the optimization.
However, the overall error regarding the flow values is surprisingly small and consistent to the values of the first scenario.
We summarize that both simulations yield very similar results, while the solution of the target value optimization may produce different values during the times of adjusting the control to new target values, but gives very similar results regarding the stabilized control situations.

\section{Outlook}\label{sec:outlook}
In this article, we introduced models for regulators in gas networks and embedded target values for their control.
The target values form a system of 6 simultaneously active and hence competing set-points, which enables the regulating element to react to changing conditions in the surrounding network without the need of continuous human observation or manual interactions.
Our description lives up to expectations and needs compared to features found in industry-standard simulators.

To capture the behavior of target value controlled regulators, we introduced two different models.
The first one is a model suitable for dynamic or transient simulation.
Here, the target values are assumed to be piecewise linear functions, and their competing logic is encoded in terms of nested \(\max-\min\)-comparisons.
A second model was designed to be used in discrete optimization problems and determines an optimized set of target values changing as little as possible while enabling the demanded network control.
It is based on a characterization of the target value behavior as a set of specific element states, the stable-pushing target value combinations.
These assume a regulator reacting to changes in its environment or of its target values immediately and perfectly precise.
Depending on the chosen combination, different constraints are active that relate the target values to the values of the element's point of operation.

Both models have been evaluated in comparison to a commercial simulator, actively used in industry. 
We showed that the behavior of our simulation model matches the expectations in that the simulated solutions stick close in terms of the resulting pressure and flow values.
The relatively higher differences in the flow values are short-lived and their integral equaling the total amount of transported gas is negligible.
These differences in the flow values represent slightly different timings and adjustment speeds of regulator actions.
Possible strategies to address this could be the introduction of delay or slope limiter.
However, this is more a question of technical specification rather than mathematical consideration.
For the optimization model, we could show that it works very well when using the same time discretization as for the simulation together with already fixed target values.
Here, the differences to both simulated results are just slightly larger than both simulations among each other, even though we used a model based on more simplifying assumptions.
In a more realistic scenario, in which the target values are subject to optimization, we need to use fewer and larger time steps.
As a result, the relative differences to the simulated results have increased.
However, they still stay very close and bounded, especially the components representing the pressures. 
This indicates that the optimization model is able to reproduce the target value behavior reasonably well.

There are multiple paths to pursue for future research.
Firstly, we could replace or add other kinds of target values, e.g., bounding values for the ratio or difference of in- and outlet pressures.
Secondly, the regulator model for simulation could be converted into a model for band regulator if \(\MINQ \le \MAXQ\) by replacing \eqref{eqn:cv_derivation_stage_i} with
\begin{align*}
    \dot q = \alpha[\min(0, \MAXQ - q) + \max(0, \MINQ - q)],
\end{align*}
just before adding the remaining target value mechanisms.
Thirdly, we may apply our techniques to embed target-value-based controls into compressor unit, group, and station modeling. 
In that regard, we have already discussed it briefly in Section~\ref{sec:compressors}, but it would need to be elaborated and established further in full detail.
Furthermore, the optimization model can probably be improved to enhance the performance when combined with state-of-the-art optimization frameworks.
While the model captures the target value behavior very well already, it is quite challenging to solve for one single element already.
The overall goal is to make it usable in a transient gas network operation model using multiple regulators and hundreds of other network elements.
Finally, the concept of a target-value-based element control can be applied to active elements in other fluid-based flow networks.
As an example, we mention pumps in water networks, which from a macroscopic viewpoint similarly control their network as compressors do for gas networks.

\section*{Acknowledgements}
The work for this article has been conducted in the Research Campus MODAL funded by the German Federal Ministry of Education and Research (BMBF) (fund numbers 05M14ZAM \& 05M20ZBM).
Furthermore the authors thank the Deutsche Forschungsgemeinschaft for their support within Project B10 and Project C02 in the Sonderforschungsbereich / Transregio 154 Mathematical Modelling, Simulation and Optimization using the Example of Gas Networks.

\typeout{}
\bibliography{bibliography}

\begin{thebibliography}{44}
\providecommand{\natexlab}[1]{#1}
\providecommand{\url}[1]{\texttt{#1}}
\expandafter\ifx\csname urlstyle\endcsname\relax
  \providecommand{\doi}[1]{doi: #1}\else
  \providecommand{\doi}{doi: \begingroup \urlstyle{rm}\Url}\fi

\bibitem[{\AA}str{\"o}m and Kumar(2014)]{Astrom2014}
K.~{\AA}str{\"o}m and P.~Kumar.
\newblock Control: A perspective.
\newblock \emph{Automatica}, 50:\penalty0 3--43, 2014.
\newblock ISSN 0005-1098.
\newblock \doi{10.1016/j.automatica.2013.10.012}.

\bibitem[{\AA}str{\"o}m and Murray(2008)]{Astrom2008}
K.~{\AA}str{\"o}m and R.~M. Murray.
\newblock \emph{Feedback Systems: An Introduction for Scientists and
  Engineers}.
\newblock Princeton University Press, USA, 2008.
\newblock ISBN 0691135762.

\bibitem[Benner et~al.(2019)Benner, Grundel, Himpe, Huck, Streubel, and
  Tischendorf]{Benner2019}
P.~Benner, S.~Grundel, C.~Himpe, C.~Huck, T.~Streubel, and C.~Tischendorf.
\newblock \emph{Gas Network Benchmark Models}, pages 171--197.
\newblock Springer International Publishing, Cham, 2019.
\newblock ISBN 978-3-030-03718-5.
\newblock \doi{10.1007/11221\_2018\_5}.

\bibitem[Bonami et~al.(2015)Bonami, Lodi, Tramontani, and
  Wiese]{BonLodTraWie2015}
P.~Bonami, A.~Lodi, A.~Tramontani, and S.~Wiese.
\newblock On mathematical programming with indicator constraints.
\newblock \emph{Mathematical Programming}, 151\penalty0 (1):\penalty0 191--223,
  June 2015.
\newblock ISSN 1436-4646.
\newblock \doi{https://doi.org/10.1007/s10107-015-0891-4}.

\bibitem[Brouwer et~al.(2011)Brouwer, Gasser, and Herty]{Brouwer2011}
J.~Brouwer, I.~Gasser, and M.~Herty.
\newblock Gas pipeline models revisited: Model hierarchies, nonisothermal
  models, and simulations of networks.
\newblock \emph{Multiscale Modeling \& Simulation}, 9\penalty0 (2):\penalty0
  601--623, 2011.
\newblock \doi{10.1137/100813580}.

\bibitem[Brown(2003)]{Bro2003}
G.~O. Brown.
\newblock The {{History}} of the {{Darcy}}-{{Weisbach Equation}} for {{Pipe
  Flow Resistance}}.
\newblock \emph{Environmental and Water Resources History}, pages 34--43, 2003.
\newblock \doi{10.1061/40650(2003)4}.

\bibitem[Camacho and Alba(2007)]{Camacho2007}
E.~F. Camacho and C.~B. Alba.
\newblock Springer-Verlag London, UK, 2007.
\newblock ISBN 978-1-85233-694-3.

\bibitem[Carter and Rachford(2003)]{CarRac2003}
R.~G. Carter and H.~H. Rachford, Jr.
\newblock Optimizing {{Line}}-{{Pack Management}} to {{Hedge Against Future
  Load Uncertainty}}.
\newblock In \emph{PSIG Annual Meeting}, {Bern, Switzerland}, oct 2003.
  {Pipeline Simulation Interest Group}.

\bibitem[Colebrook and White(1937)]{ColWhi1937}
C.~F. Colebrook and C.~M. White.
\newblock Experiments with fluid friction in roughened pipes.
\newblock \emph{Proceedings of the Royal Society of London. Series A -
  Mathematical and Physical Sciences}, 161\penalty0 (906):\penalty0 367--381,
  1937.
\newblock \doi{10.1098/rspa.1937.0150}.

\bibitem[Darby and Nikolaou(2012)]{DARBY2012328}
M.~L. Darby and M.~Nikolaou.
\newblock Mpc: Current practice and challenges.
\newblock \emph{Control Engineering Practice}, 20\penalty0 (4):\penalty0
  328--342, 2012.
\newblock ISSN 0967-0661.
\newblock \doi{https://doi.org/10.1016/j.conengprac.2011.12.004}.
\newblock URL
  \url{https://www.sciencedirect.com/science/article/pii/S0967066111002528}.
\newblock Special Section: IFAC Symposium on Advanced Control of Chemical
  Processes - ADCHEM 2009.

\bibitem[Domschke et~al.(2021)Domschke, Hiller, Lang, Mehrmann, Morandin, and
  Tischendorf]{DomHilLanMeh2021}
P.~Domschke, B.~Hiller, J.~Lang, V.~Mehrmann, R.~Morandin, and C.~Tischendorf.
\newblock Gas {{Network Modeling}}: An {{Overview}}.
\newblock 2021.

\bibitem[{European Commission}(2020)]{EurLex2020}
{European Commission}.
\newblock Communication from the commission to the european parliament, the
  council, the european economic and social committee and the committee of the
  regions - a hydrogen strategy for a climate-neutral europe.
\newblock
  \url{https://eur-lex.europa.eu/legal-content/EN/TXT/?uri=CELEX:52020DC0301},
  2020.
\newblock Accessed: 2021-11-22.

\bibitem[{Federal Ministry for Economic Affairs {and} Energy}(2021)]{Fed2021}
{Federal Ministry for Economic Affairs {and} Energy}.
\newblock Still indispensable for a reliable energy supply.
\newblock
  \url{https://www.bmwi.de/Redaktion/EN/Dossier/conventional-energy-sources.html},
  2021.
\newblock Accessed: 2021-11-22.

\bibitem[F{\"u}genschuh et~al.(2015)F{\"u}genschuh, Gei{\ss}ler, Gollmer,
  Morsi, Pfetsch, R{\"o}vekamp, Schmidt, Spreckelsen, and
  Steinbach]{FugGeiGolMor2015}
A.~F{\"u}genschuh, B.~Gei{\ss}ler, R.~Gollmer, A.~Morsi, M.~E. Pfetsch,
  J.~R{\"o}vekamp, M.~Schmidt, K.~Spreckelsen, and M.~C. Steinbach.
\newblock Physical and technical fundamentals of gas networks.
\newblock In  \citet{KocHilPfeSch2015}.

\bibitem[Gurobi~Optimization(2020)]{Gur2020}
L.~Gurobi~Optimization.
\newblock Gurobi {{Optimizer Reference Manual}}, version 9.1.
\newblock https://www.gurobi.com, 2020.

\bibitem[Haber et~al.(2014)Haber, Schmitz, and Zabet]{HABER20145333}
R.~Haber, U.~Schmitz, and K.~Zabet.
\newblock Implementation of pfc (predictive functional control) in a
  petrochemical plant.
\newblock \emph{IFAC Proceedings Volumes}, 47\penalty0 (3):\penalty0
  5333--5338, 2014.
\newblock ISSN 1474-6670.
\newblock \doi{https://doi.org/10.3182/20140824-6-ZA-1003.02440}.
\newblock URL
  \url{https://www.sciencedirect.com/science/article/pii/S1474667016424432}.
\newblock 19th IFAC World Congress.

\bibitem[Hennings(2018)]{Hen2018}
F.~Hennings.
\newblock Benefits and {{Limitations}} of {{Simplified Transient Gas Flow
  Formulations}}.
\newblock In \emph{Operations {{Research Proceedings}} 2017}, pages 231--237.
  {Springer}, 2018.

\bibitem[Hennings(2021)]{Hen2021}
F.~Hennings.
\newblock Large-scale empirical study on the momentum equation's inertia term.
\newblock \emph{Journal of Natural Gas Science and Engineering}, July 2021.
\newblock ISSN 1875-5100.
\newblock \doi{10.1016/j.jngse.2021.104153}.

\bibitem[Hennings et~al.(2021)Hennings, Anderson, {Hoppmann-Baum}, Turner, and
  Koch]{HenAndHopTur2021}
F.~Hennings, L.~Anderson, K.~{Hoppmann-Baum}, M.~Turner, and T.~Koch.
\newblock Controlling transient gas flow in real-world pipeline intersection
  areas.
\newblock \emph{Optimization and Engineering}, 22\penalty0 (2):\penalty0
  687--734, June 2021.
\newblock ISSN 1573-2924.
\newblock \doi{10.1007/s11081-020-09559-y}.

\bibitem[{Hoppmann-Baum} et~al.(2020){Hoppmann-Baum}, Hennings, Zittel, Gotzes,
  Spreckelsen, Spreckelsen, and Koch]{HopHenZitGot2020}
K.~{Hoppmann-Baum}, F.~Hennings, J.~Zittel, U.~Gotzes, E.-M. Spreckelsen,
  K.~Spreckelsen, and T.~Koch.
\newblock From {{Natural Gas}} towards {{Hydrogen}} - {{A Feasibility Study}}
  on {{Current Transport Network Infrastructure}} and its {{Technical
  Control}}.
\newblock Technical Report 20-27, {ZIB}, {Takustr. 7, 14195 Berlin}, 2020.

\bibitem[Huck(2018)]{Huck2018Perturbation}
C.~Huck.
\newblock \emph{Perturbation analysis and numerical discretisation of
  hyperbolic partial differential algebraic equations describing flow
  networks}.
\newblock PhD thesis, Humboldt-Universität zu Berlin,
  Mathematisch-Naturwissenschaftliche Fakultät, 2018.

\bibitem[Huck et~al.(2014)Huck, Jansen, and Tischendorf]{JansenHuckTischendorf}
C.~Huck, L.~Jansen, and C.~Tischendorf.
\newblock A topology based discretization of pdaes describing water
  transportation networks.
\newblock \emph{PAMM}, 14\penalty0 (1):\penalty0 923--924, 2014.
\newblock \doi{https://doi.org/10.1002/pamm.201410442}.
\newblock URL
  \url{https://onlinelibrary.wiley.com/doi/abs/10.1002/pamm.201410442}.

\bibitem[Koch et~al.(2015)Koch, Hiller, Pfetsch, and Schewe]{KocHilPfeSch2015}
T.~Koch, B.~Hiller, M.~E. Pfetsch, and L.~Schewe, editors.
\newblock \emph{Evaluating {{Gas Network Capacities}}}, volume~21 of
  \emph{{{MOS}}-{{SIAM Series}} on {{Optimization}}}.
\newblock {SIAM}, 2015.

\bibitem[Kolb et~al.(2010)Kolb, Lang, and Bales]{KolLanBal2010}
O.~Kolb, J.~Lang, and P.~Bales.
\newblock An implicit box scheme for subsonic compressible flow with
  dissipative source term.
\newblock \emph{Numerical Algorithms}, 53\penalty0 (2-3):\penalty0 293--307,
  2010.

\bibitem[Kuphaldt(2019)]{Kup2019}
T.~R. Kuphaldt.
\newblock {Source Code of Lessons In Industrial Instrumentation textbook}.
\newblock self published files at
  \url{https://www.ibiblio.org/kuphaldt/socratic/sinst/book/liii.tar.gz}, 2019.
\newblock Last accessed 2021-11-22.

\bibitem[Mallinson et~al.(1993)Mallinson, Fincham, Bull, Rollett, and
  Wong]{MalFinBulRol1993}
J.~Mallinson, A.~Fincham, S.~Bull, J.~S. Rollett, and M.~L. Wong.
\newblock Methods for optimizing gas transmission networks.
\newblock \emph{Annals of Operations Research}, 43\penalty0 (8):\penalty0
  443--454, 1993.

\bibitem[Meireles et~al.(2003)Meireles, Almeida, and Simoes]{Meireles2003}
M.~Meireles, P.~Almeida, and M.~Simoes.
\newblock A comprehensive review for industrial applicability of artificial
  neural networks.
\newblock \emph{IEEE Transactions on Industrial Electronics}, 50\penalty0
  (3):\penalty0 585--601, 2003.
\newblock \doi{10.1109/TIE.2003.812470}.

\bibitem[Menon(2005)]{Men2005}
E.~S. Menon.
\newblock \emph{Gas {{Pipeline Hydraulics}}}.
\newblock {CRC Press}, {Boca Raton}, 1st edition, 2005.

\bibitem[Nguyen et~al.(2019)Nguyen, Taniguchi, Eciolaza, Campos, Palhares, and
  Sugeno]{Nguyen2019}
A.-T. Nguyen, T.~Taniguchi, L.~Eciolaza, V.~Campos, R.~Palhares, and M.~Sugeno.
\newblock Fuzzy control systems: Past, present and future.
\newblock \emph{IEEE Computational Intelligence Magazine}, 14\penalty0
  (1):\penalty0 56--68, 2019.
\newblock \doi{10.1109/MCI.2018.2881644}.

\bibitem[Nikuradse(1950)]{Nik1950}
J.~Nikuradse.
\newblock \emph{Laws of Flow in Rough Pipes}.
\newblock {National Advisory Committee for Aeronautics Washington}, 1950.

\bibitem[Onnes(1902)]{Virial_expansion}
H.~K. Onnes.
\newblock Expression of the equation of state of gases and liquids by means of
  series.
\newblock pages 125--147, 1902.

\bibitem[Osiadacz(1996)]{Osi1996}
A.~J. Osiadacz.
\newblock Different {{Transient Flow Models}} - {{Limitations}},
  {{Advantages}}, {{And Disadvantages}}.
\newblock In \emph{{{PSIG}}-9606}. {Pipeline Simulation Interest Group}, 1996.

\bibitem[Pambour et~al.(2016)Pambour, {Bolado-Lavin}, and
  Dijkema]{PamBolDij2016}
K.~A. Pambour, R.~{Bolado-Lavin}, and G.~P. Dijkema.
\newblock An integrated transient model for simulating the operation of natural
  gas transport systems.
\newblock \emph{Journal of Natural Gas Science and Engineering}, 28:\penalty0
  672--690, 2016.
\newblock ISSN 1875-5100.
\newblock \doi{10.1016/j.jngse.2015.11.036}.

\bibitem[{Papay}(1968)]{Pap1968}
{Papay}.
\newblock A termel{\'e}stechnol{\'o}giai param{\'e}terek v{\'a}ltoz{\'a}sa a
  g{\'a}ztelepek muvel{\'e}se sor{\'a}n.
\newblock \emph{OGIL Musz. Tud. Kozl.}, 1968.

\bibitem[Pfetsch et~al.(2014)Pfetsch, F{\"u}genschuh, Gei{\ss}ler, Gei{\ss}ler,
  Gollmer, Hiller, Humpola, Koch, Lehmann, Martin, Morsi, R{\"o}vekamp, Schewe,
  Schmidt, Schultz, Schwarz, Schweiger, Stangl, Steinbach, Vigerske, and
  Willert]{PfeFueGeiGei2014}
M.~E. Pfetsch, A.~F{\"u}genschuh, B.~Gei{\ss}ler, N.~Gei{\ss}ler, R.~Gollmer,
  B.~Hiller, J.~Humpola, T.~Koch, T.~Lehmann, A.~Martin, A.~Morsi,
  J.~R{\"o}vekamp, L.~Schewe, M.~Schmidt, R.~Schultz, R.~Schwarz, J.~Schweiger,
  C.~Stangl, M.~C. Steinbach, S.~Vigerske, and B.~M. Willert.
\newblock Validation of {{Nominations}} in {{Gas Network Optimization}}:
  {{Models}}, {{Methods}}, and {{Solutions}}.
\newblock \emph{Optimization Methods and Software}, 2014.
\newblock \doi{https://doi.org/10.1080/10556788.2014.888426}.

\bibitem[Porter and Khaki-Sedigh(1988)]{Porter1988}
B.~Porter and A.~Khaki-Sedigh.
\newblock Design of robust adaptive digital setpoint tracking pl controllers
  incorporating recursive step-response matrix identifiers for gas turbines.
\newblock \emph{Transactions of the Institute of Measurement and Control},
  10\penalty0 (1):\penalty0 9--14, 1988.
\newblock \doi{10.1177/014233128801000103}.

\bibitem[Rachford and Carter(2000)]{RacCar2000}
H.~H. Rachford, Jr. and R.~G. Carter.
\newblock Optimizing {{Pipeline Control In Transient Gas Flow}}.
\newblock In \emph{{{PSIG Annual Meeting}}}, {Savannah, Georgia}, Oct. 2000.
  {Pipeline Simulation Interest Group}.

\bibitem[Saleh(2002)]{Sal2002}
J.~M. Saleh.
\newblock \emph{Fluid {{Flow Handbook}}}.
\newblock {McGraw-Hill Professional}, 2002.

\bibitem[Schmidt et~al.(2015)Schmidt, Steinbach, and Willert]{Schmidt2015}
M.~Schmidt, M.~C. Steinbach, and B.~M. Willert.
\newblock High detail stationary optimization models for gas networks.
\newblock \emph{Optimization and Engineering}, 16\penalty0 (1):\penalty0
  131--164, Mar. 2015.
\newblock ISSN 1573-2924.
\newblock \doi{10.1007/s11081-014-9246-x}.

\bibitem[Schmidt et~al.(2017)Schmidt, A{\ss}mann, Burlacu, Humpola, Joormann,
  Kanelakis, Koch, Oucherif, Pfetsch, Schewe, Schwarz, and
  Sirvent]{SchAssmBurHum2017}
M.~Schmidt, D.~A{\ss}mann, R.~Burlacu, J.~Humpola, I.~Joormann, N.~Kanelakis,
  T.~Koch, D.~Oucherif, M.~E. Pfetsch, L.~Schewe, R.~Schwarz, and M.~Sirvent.
\newblock {{GasLib}}\textemdash{{A}} library of gas network instances.
\newblock \emph{Data}, 2\penalty0 (4), 2017.
\newblock ISSN 2306-5729.
\newblock \doi{10.3390/data2040040}.

\bibitem[{SIMONE Research Group} and {LIWACOM Informationstechnik
  GmbH}(2021)]{SIMONE}
{SIMONE Research Group} and {LIWACOM Informationstechnik GmbH}.
\newblock {SIMONE software -- Equations and Methods}, 2021.

\bibitem[Skogestad and Postlethwaite(2005)]{Skogestad2005}
S.~Skogestad and I.~Postlethwaite.
\newblock \emph{Multivariable Feedback Control: Analysis and Design}.
\newblock John Wiley \& Sons, Inc., Hoboken, NJ, USA, 2005.
\newblock ISBN 0470011688.

\bibitem[Stelter(1988)]{Ste1988}
R.~Stelter.
\newblock Control of gas transport {{Systems}}\textendash{{A}} multilevel
  approach.
\newblock \emph{IFAC Proceedings Volumes}, 21\penalty0 (19):\penalty0 309--314,
  June 1988.
\newblock ISSN 1474-6670.
\newblock \doi{10.1016/S1474-6670(17)54511-5}.

\bibitem[Streubel(2021)]{TomSThesis}
T.~Streubel.
\newblock Simulation of piecewise smooth differential algebraic equations with
  application to gas networks - aspects of modelling, algorithmic treatment and
  numerical analysis.
\newblock (unpublished manuscript), 2021.

\end{thebibliography}

\end{document}